\newcommand{\ulPC}{\underline{P}C}
\newcommand{\ulPHH}{\underline{P}HH}
\newcommand{\calulPHH}{\mathcal{\underline{P}HH}}
\newcommand{\ulMHH}{\underline{M}HH}
\newcommand{\calulMHH}{\mathcal{\underline{M}HH}}
\newcommand{\ulMO}{\underline{M}\mathcal{O}}
\newcommand{\ulPOmega}{\underline{P}\Omega}
\newcommand{\ulMOmega}{\underline{M}\Omega}
\newcommand{\ulMHC}{\underline{M}HC}
\newcommand{\ulMDM}{\underline{\mathbf{M}}\mathbf{DM}}
\newcommand{\DMeff}{\mathbf{DM}^{\mathrm{eff}}}
\newcommand{\ulMDMeff}{\ulMDM^{\mathrm{eff}}}
\newcommand{\bfulMOmega}{\underline{\mathbf{M}}\mathbf{\Omega}}
\newcommand{\ulMHP}{\underline{M}HP}
\newcommand{\ulMHNC}{\underline{M}HC^{-}}
\newcommand{\bfulMO}{\mathbf{\underline{M}\calO}}
\newcommand{\rep}{\mathrm{rep}}
\newcommand{\gp}{\mathrm{gp}}
\newcommand{\cyc}{\mathrm{cyc}}
\newcommand{\Zar}{\mathrm{Zar}}
\newcommand{\op}{\mathrm{op}}
\newcommand{\Spec}{\mathrm{Spec}}
\newcommand{\bbA}{\mathbb{A}}
\newcommand{\bbH}{\mathbb{H}}
\newcommand{\bbL}{\mathbb{L}}
\newcommand{\bbN}{\mathbb{N}}
\newcommand{\bbP}{\mathbb{P}}
\newcommand{\bbQ}{\mathbb{Q}}
\newcommand{\bbT}{\mathbb{T}}
\newcommand{\bbZ}{\mathbb{Z}}
\newcommand{\calA}{\mathcal{A}}
\newcommand{\calB}{\mathcal{B}}
\newcommand{\calD}{\mathcal{D}}
\newcommand{\calI}{\mathcal{I}}
\newcommand{\calM}{\mathcal{M}}
\newcommand{\calO}{\mathcal{O}}
\newcommand{\calX}{\mathcal{X}}
\newcommand{\overX}{\overline{X}}
\newcommand{\overY}{\overline{Y}}
\numberwithin{equation}{section}
\theoremstyle{plain}
\newtheorem{thm}{Theorem}[section]
\crefname{thm}{Theorem}{Theorems}
\newtheorem{dfn}[thm]{Definition}
\crefname{dfn}{Definition}{Definitions}
\newtheorem{prop}[thm]{Proposition}
\crefname{prop}{Proposition}{Propositions}
\newtheorem{lemma}[thm]{Lemma}
\crefname{lemma}{Lemma}{Lemmas}
\newtheorem{cor}[thm]{Corollarry}
\crefname{cor}{Corollarry}{Corollarries}
\theoremstyle{remark}
\newtheorem{rmk}[thm]{Remark}
\crefname{rmk}{Remark}{Remarks}
\theoremstyle{definition}
\crefname{ex}{Example}{Examples}
\begin{document}

\title{Hochschild and cyclic Homologies with bounded poles}
\author{Masaya Sato}
\address{Tokyo Institute of Technology, 
2-12-1 Ookayama, Meguro-ku, 
Tokyo 152-8551, Japan}
\email{sato.m.bj@m.titech.ac.jp}

\maketitle
\begin{abstract}
     We show that the classical Hochschild homology and (periodic and negative) cyclic homology groups are representable in the category of motives with modulus. We do this by constructing Hochschild homology and (periodic and negative) cyclic homologies for modulus pairs. We show a modulus version of HKR theorem, that is, there exists an isomorphism between modulus Hochschild homology and modulus K\"ahler differentials for affine normal crossing modulus pairs. By using the representability of modulus Hodge sheaves in the category of motives with modulus, we construct an object of the category of motives with modulus which represents modulus Hochschild homology. Similarly, We compare modulus de Rham cohomology and modulus cyclic homologies and obtain a representability of modulus cyclic homologies.
\end{abstract}

\tableofcontents

\section{Introduction}\label{intro}

Let $A$ be a commutative ring. Then there is a trace map from algebraic $K$-theory $K(A)$ to cyclic homology $HC(A)$, and this map induces an isomorphism
\[
K_n(A,I)\otimes\bbQ\cong HC_{n-1}(A,I)\otimes\bbQ
\]
for nilpotent ideal $I$ \cite{Goo86}. Therefore cyclic homology is important to compute algebraic $K$-theory. Cyclic homology is constructed from Hochschild homology $HH(A)$ as a homotopy orbits $HH(A)_{h\bbT}$. In this paper, we show Hochschild and (negative, periodic) cyclic homologies are representable in the category of motives with modulus $\ulMDMeff_k$. \\
\indent Voevodsky constructed the triangulated category of motives $\DMeff_k$ over a field $k$ which unifies $\bbA^1$-invariant cohomology theories. His theory has many applications, but it can not be applied to Hochschild and cyclic homologies, because they are not $\bbA^1$-invariant. In \cite{KMSY21a}, \cite{KMSY21b}, \cite{KMSY22}, Kahn, Miyazaki, Saito, and Yamazaki constructed a triangulated category $\ulMDMeff_k$ of motives with modulus which contains $\mathbf{DM}^{\mathrm{eff}}_k$ as a full subcategory. Just as $\DMeff_k$ is constructed from smooth schemes $X$ over $k$, $\ulMDMeff$ is constructed from \emph{modulus pairs} $\calX=(\overX,X^\infty)$, where $\overX$ is a separated and of finite type $k$-scheme, and $X^\infty$ is a effective Cartier divisor of $\overX$ such that $\overX\setminus |X^\infty|$ is smooth over $k$. When $k$ admits resolution of singularities, e.g. $\mathrm{char}(k)=0$ \cite{Hir64}, $\ulMDMeff_k$ can be constructed from \emph{strict normal crossing} modulus pairs $\calX=(\overX,X^\infty)$, that is, $\overX$ is smooth and $|X^\infty|$ has strict normal crossing.\footnote{cf.\cite[Corollary 1.9.5]{KMSY21a} By the construction of $\ulMDMeff_k$, abstract blow-ups become isomorphisms.} An \emph{ambient morphism} $f:(\overX,X^\infty)\to(\overY,Y^\infty)$ of modulus pairs is a morphism of schemes $f:\overX\to\overY$ such that $X^\infty\geq f^*Y^\infty$. The modulus pair $\overline{\Box}:=(\bbP^1,\infty)$ is called the \emph{cube} and plays a role of unit interval instead of $\bbA^1$.\\
\indent In \cite{KM23a}, \cite{KM23b}, Kelly and Miyazaki constructed objects $\bfulMO$ and $\bfulMOmega^q$ in $\ulMDMeff_k$ such that they represents Zariski and Hodge cohomologies, if $\mathrm{char}(k)=0$. Koizumi did that if $k$ admits resolutions of singularities \cite{Koi23}. More precisely:
\begin{thm}\label{Hodge realization}
For any strict normal crossing modulus pair $\calX=(\overX,X^\infty)$, there are isomorphisms
\begin{align*}
    &\mathrm{Hom}_{\ulMDMeff_k}(M(\calX),\bfulMO[n])\cong H^n_{\mathrm{Zar}}(\overX,\ulMO_{\calX})\\
    &\mathrm{Hom}_{\ulMDMeff_k}(M(\calX),\bfulMOmega^q[n])\cong H^n_{\mathrm{Zar}}(\overX,\ulMOmega^q_{\calX}),
\end{align*}   
where $\ulMO_{\calX}:=\sqrt{\calI}\otimes\calI^{-1}$, $\calI$ is the ideal sheaf corresponds to the effective Cartier divisor $X^\infty$, and $\ulMOmega^q_{\calX}:=\ulMO_{\calX}\otimes\Omega^q_{\overX}(\log X^\infty)$.
\end{thm}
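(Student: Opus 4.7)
The plan is to construct $\bfulMO$ and $\bfulMOmega^q$ as explicit presheaves with modulus transfers that are strictly $\overline{\Box}$-invariant and satisfy Zariski descent, and then to invoke a general representability criterion in $\ulMDMeff_k$. I will focus on the strategy for $\ulMO$; the case of $\ulMOmega^q$ is a twisted version.

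First I would verify that $\calX \mapsto \Gamma(\overX, \ulMO_{\calX})$ extends to a presheaf with modulus transfers. The sheaf $\ulMO_{\calX} = \sqrt{\calI} \otimes \calI^{-1}$ is a natural line bundle on $\overX$ measuring the difference between the full modulus $X^\infty$ and its reduction $|X^\infty|_{\mathrm{red}}$; for a finite modulus correspondence, transfers are built from the trace map, and one verifies that the modulus condition $X^\infty \geq f^* Y^\infty$ is exactly what makes the trace respect this twist. The analogous construction for $\Omega^q_{\overX}(\log X^\infty)$ would rely on the reciprocity theory of K\"ahler differentials, combined with the $\ulMO$-twist to land in the appropriate category.

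The main technical step is to prove strict cube-invariance: for the projection $p : \calX \otimes \overline{\Box} \to \calX$, the natural map $\ulMO_{\calX} \to R p_* \ulMO_{\calX \otimes \overline{\Box}}$ must be an isomorphism in the derived category on $\overX$. This reduces to a relative $H^*(\bbP^1, \calO(-1))$-type vanishing, which is precisely what the choice of $\sqrt{\calI} \otimes \calI^{-1}$ in place of $\calO$ is engineered to produce: the extra twist by $\overX \times \{\infty\}$ coming from the cube's boundary kills the unwanted cohomology on the $\bbP^1$-factor. For $\ulMOmega^q$, a log de Rham decomposition along $p$ reduces the vanishing to the case of $\ulMO$ together with a shift.

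Once strict cube-invariance and Zariski descent are in place, representability should follow from the general criterion of \cite{KM23a} and \cite{KM23b}: such a presheaf defines an object of $\ulMDMeff_k$ whose Hom from $M(\calX)$ computes its Zariski cohomology on $\overX$. I expect the hardest step to be cube-invariance, since the log poles on $\calX$ and on $\overline{\Box}$ interact nontrivially on the product, and one must additionally verify compatibility with abstract blow-ups to descend to $\ulMDMeff_k$; this compatibility is what invokes resolution of singularities, following \cite{Koi23}.
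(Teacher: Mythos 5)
First, a point of reference: the paper does not prove \cref{Hodge realization} at all --- it is imported wholesale from \cite{KM23a}, \cite{KM23b} and \cite{Koi23}, and everything downstream treats it as a black box. So your text is not an alternative to an argument in this paper but an attempted reconstruction of the cited works. Your outline (extend $\calX\mapsto\Gamma(\overX,\ulMO_\calX)$ to a presheaf with modulus transfers, establish strict $\overline{\Box}$-invariance and descent, apply a representability criterion, and check blow-up invariance so as to use resolution of singularities) is indeed the shape of the argument there, so as a citation-level summary it is sound. As a proof it is a plan: the two genuinely hard inputs --- that the trace along a finite modulus correspondence preserves the integral structure $\sqrt{\calI}\otimes\calI^{-1}$ (resp.\ the twisted log differentials), and that $\mathrm{Hom}$ in $\ulMDMeff_k$ out of $M(\calX)$ into the resulting object is computed by Zariski cohomology on $\overX$ rather than by some further localized invariant --- are asserted, not argued, and each occupies a substantial part of \cite{KM23a}, \cite{KM23b}.

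Second, your stated mechanism for cube-invariance is incorrect as written. On the cube $\overline{\Box}=(\bbP^1,\infty)$ the divisor is reduced, so $\sqrt{\calI}=\calI$ and $\ulMO_{\overline{\Box}}\cong\calO_{\bbP^1}$: the twist is \emph{trivial} on the cube factor, and $\ulMO_{\calX\otimes\overline{\Box}}\cong p^*\ulMO_{\calX}$. The relevant fiberwise computation is therefore $R\Gamma(\bbP^1,\calO_{\bbP^1})=k$, not a vanishing of $H^*(\bbP^1,\calO(-1))$; if the fiber sheaf really were $\calO(-1)$, then $Rp_*$ would vanish identically and you would obtain the zero object rather than an isomorphism $\ulMO_{\calX}\xrightarrow{\sim}Rp_*\ulMO_{\calX\otimes\overline{\Box}}$. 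The actual role of the twist $\sqrt{\calI}\otimes\calI^{-1}$ is on the $\calX$-side, where $X^\infty$ may be non-reduced; and the real content of strict $\overline{\Box}$-invariance in the modulus setting is not a sheaf-theoretic statement about the projection but the insensitivity of the presheaf with transfers to $\overline{\Box}$ against \emph{all} admissible correspondences, which is governed by the modulus inequality $X^\infty\geq f^*Y^\infty$. That argument (or a citation to it) is what is missing; the $\calO(-1)$ heuristic does not substitute for it.
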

\indent If $A$ is a smooth $k$-algebra, then Hochschild homology $HH_n(A/k)$ is isomorphic to the module of K\"ahler differentials $\Omega^n_{A/k}$, via the so-called HKR theorem, \cite{HKR62}. Furthermore if $k$ contains $\bbQ$,
\[
HC_n(A/k)\cong (\Omega^n_{A/k}/d\Omega^{n-1}_{A/k})\times H^{n-2}(\Omega^*_{A/k})\times H^{n-4}(\Omega^*_{A/k})\times\cdots
\]
by \cite[Theorem 3.4.12]{Lod98}. In this paper, we define Hochschild homology $\ulMHH(A,f)$ for affine modulus pair $(\Spec A,(f))$ and show the modulus version of the HKR theorem.

\begin{thm}[\cref{HKR}]
    Let $(\mathrm{Spec}A,(f))$ be a strict normal crossing modulus pair such that $f$ is a product of prime elements. Then there is an isomorphism
     \begin{align*}
         \uline{M}\Omega^*(A,f)\cong\uline{M}HH_*(A,f).
     \end{align*}
\end{thm}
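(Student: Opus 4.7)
The plan is to mirror the classical HKR proof in the modulus setting: construct the natural antisymmetrization map from modulus Hochschild chains to modulus K\"ahler differentials, a quasi-section in the opposite direction, and then verify by local computation in strict normal crossing coordinates that both maps respect the modulus structure. Since $A$ is smooth over $k$ (as $(\Spec A,(f))$ is strict normal crossing), classical HKR already yields $HH_*(A/k)\cong \Omega^*_{A/k}$, and likewise over $A[1/f]$, so the content of the modulus version is to show that this isomorphism restricts to an isomorphism on the modulus subobjects sitting inside $HH_*(A[1/f]/k)$ and $\Omega^*_{A[1/f]/k}$ respectively.

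\medskip

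Zariski-locally on $\Spec A$ one has a regular system of parameters $x_1,\ldots,x_d$ in which $f = u\cdot x_1^{n_1}\cdots x_r^{n_r}$ for a unit $u\in A^\times$, so that $\ulMOmega^n(A,f) = \ulMO(A,f)\otimes_A \Omega^n_{A/k}(\log(f))$ is generated over $\ulMO(A,f)$ by wedges of $dx_i/x_i$ (for $i\leq r$) and $dx_j$ (for $j>r$). I would define
\[ \varepsilon\colon \ulMHH_n(A,f)\to \ulMOmega^n(A,f),\qquad a_0\otimes\cdots\otimes a_n\;\mapsto\; a_0\,da_1\wedge\cdots\wedge da_n, \]
and its inverse $\iota$ by the usual signed symmetrization $\omega\,da_1\wedge\cdots\wedge da_n\mapsto \tfrac{1}{n!}\sum_\sigma \mathrm{sgn}(\sigma)\,\omega\otimes a_{\sigma(1)}\otimes\cdots\otimes a_{\sigma(n)}$. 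Under $\varepsilon$ a Hochschild cycle of the shape $\omega\otimes x_{i_1}\otimes x_{i_1}^{-1}\otimes\cdots$ maps to $\omega\cdot dx_{i_1}/x_{i_1}\wedge\cdots$, so the definition of $\ulMHH$ should be arranged so that the admissible coefficients $\omega$ are precisely those in $\ulMO(A,f)$; conversely $\iota$ lifts such a form back to an admissible Hochschild cycle. The prime factorization $f=\pi_1^{n_1}\cdots \pi_r^{n_r}$ lets one reduce the verification to a single prime at a time, either by induction or by a Mayer--Vietoris argument on $\Spec A$.

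\medskip

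The main obstacle will be tracking the pole order through both the antisymmetrization and the Hochschild boundary: since $\iota$ averages over the symmetric group, one must verify that each individual summand, not just the sum, satisfies the modulus bound, and one must show that the Hochschild differential preserves $\ulMHH_*$ as a subcomplex of the full Hochschild complex of $A[1/f]$. The SNC hypothesis is indispensable here, as it supplies local coordinates in which the modulus condition splits into independent conditions at each component of $(f)$; the hypothesis that $f$ is a product of primes ensures this splitting is globally meaningful and lets the local identifications be glued to a global isomorphism matching the classical HKR isomorphism on the generic fibre.
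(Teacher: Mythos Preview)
Your outline has the right shape---restrict the classical HKR isomorphism for $A[f^{-1}]$ to the modulus subobjects and check compatibility locally in SNC coordinates---and this is indeed what the paper does. But two substantive points separate your sketch from a proof.

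First, the gluing step is where the real work lies, and you have not supplied a mechanism. Saying that the local identifications ``can be glued'' presumes that $\ulMHH$ (or $\ulPHH$) satisfies Zariski and \'etale descent, i.e.\ that $\ulPHH(A,f)\otimes_A B \simeq \ulPHH(B,f)$ for suitable \'etale $A\to B$. This is not formal: $\ulPHH$ is defined as a subcomplex of $HH(A[f^{-1}])$ by a monoid-theoretic condition, and base change does not obviously respect that condition. The paper handles this by identifying $\ulPHH(A,f)$ with Rognes' log Hochschild homology $A\otimes^{\bbL}_{(A\otimes_k A)^{\rep}}A$ for the pre-log ring $(A,A\cap A[f^{-1}]^*)$, and then proving descent via flat base change for the replete bar construction. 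Only after descent is established does the paper reduce to the polynomial model $(k[x_1,\dots,x_s,y_1,\dots,y_t],\,x_1^{r_1}\cdots x_s^{r_s})$, where both sides decompose as explicit direct summands of $\Omega^*_{A[f^{-1}]}$ and $HH_*(A[f^{-1}])$ and the classical HKR map visibly respects the splitting. Your proposed ``induction on primes'' or ``Mayer--Vietoris'' does not obviously substitute for this: removing one prime factor does not simplify the modulus structure in a way that sets up an induction, and Mayer--Vietoris again presupposes the descent you have not proved.

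Second, your inverse map $\iota$ carries a factor $\tfrac{1}{n!}$, which forces $\mathrm{char}(k)=0$. The paper's HKR theorem is stated without that hypothesis: the map it uses is the antisymmetrization $\bigwedge^*_A\ulPOmega^1_{(A,f)}\to \ulPHH_*(A,f)$ coming from the shuffle product and the universal property of the exterior algebra, which is integral. The $\tfrac{1}{n!}$-map only enters later, in the cyclic homology section, where characteristic zero is explicitly assumed. If you want your argument to match the paper's generality, you should run the comparison in the shuffle direction and prove it is an isomorphism by the direct-summand argument rather than by exhibiting an explicit rational inverse.
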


Furthermore, we define (negative, periodic) cyclic homologies for modulus pairs $\ulMHC(A,f)$, $\ulMHNC(A,f)$, $\ulMHP(A,f)$ and show isomorphims if $\mathrm{char}(k)=0$.

\begin{thm}[\cref{isom for cyclic}]
    If $(A,f)$ is strict normal crossing and $f=f_1^{r_1}\cdots f_n^{r_n}$ for prime elements $f_j\in A$, then
    \begin{align*}
        &\ulMHC_n(A,f)\cong (\ulMOmega^n_{(A,f)}/d\ulMOmega^{n-1}_{(A,f)})\times H^{n-2}(\ulMOmega^*_{(A,f)})\times H^{n-4}(\ulMOmega^*_{(A,f)})\times\cdots\\        &\ulMHNC(A,f)\cong\mathrm{Ker}(\ulMOmega_{(A,f)}^n\to \ulMOmega_{(A,f)}^{n+1})\times H^{n+2}(\ulMOmega_{(A,f)^*})\times H^{n+4}(\ulMOmega_{(A,f)}^*)\times\cdots\\
        &\ulMHP_n(A,f)\cong \prod_{p\in\mathbb{Z}}H^{2p-n}(\ulMOmega^*(A,f)).
    \end{align*}
\end{thm}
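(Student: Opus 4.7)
The strategy is to transport Loday's proof of \cite[Theorem 3.4.12]{Lod98} to the modulus setting. The two essential inputs are the modulus HKR isomorphism $\ulMHH_*(A,f)\cong\ulMOmega^*_{(A,f)}$ from \cref{HKR} and a chain-level matching of Connes' operator $B$ on $\ulMHH$ with the de Rham differential $d$ on $\ulMOmega^*_{(A,f)}$. Since $\ulMHC$, $\ulMHNC$, $\ulMHP$ are built from the Connes bicomplex of the mixed complex $(\ulMHH_*(A,f),b,B)$, the standard mixed-complex machinery automatically produces the modulus SBI long exact sequences
\[
\cdots\to\ulMHH_n(A,f)\xrightarrow{I}\ulMHC_n(A,f)\xrightarrow{S}\ulMHC_{n-2}(A,f)\xrightarrow{B}\ulMHH_{n-1}(A,f)\to\cdots
\]
together with their analogues relating $\ulMHNC$ and $\ulMHP$ to $\ulMHH$. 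I would record these as a preliminary lemma.

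The heart of the argument is to upgrade \cref{HKR} to a quasi-isomorphism of mixed complexes $(\ulMHH_*(A,f),B)\simeq(\ulMOmega^*_{(A,f)},d)$. The chain-level antisymmetrization $\omega\mapsto\tfrac{1}{n!}\sum_\sigma\operatorname{sgn}(\sigma)[a_0|a_{\sigma(1)}|\cdots|a_{\sigma(n)}]$ sends $\ulMOmega^n_{(A,f)}$ into the modulus subcomplex defining $\ulMHH_n(A,f)$, because the bounded-pole condition coming from $X^\infty$ is $A$-linear and hence preserved by antisymmetrization. The identity $B\equiv d$ after antisymmetrization is then the same bookkeeping as in \cite[Proposition 2.3.7]{Lod98}, applied in the tensor decomposition $\ulMOmega^*_{(A,f)}=\ulMO_{(A,f)}\otimes_A\Omega^*_A(\log f)$.

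Over a field of characteristic zero, this identification of mixed complexes reduces the Connes bicomplex for $(A,f)$ to one built from $\ulMOmega^*_{(A,f)}$ with horizontal differential $d$ and zero vertical differential. The three formulas of the theorem then fall out of the SBI sequences by the same argument as Loday: an induction on $n$ using $I$ and $S$ yields the splitting of $\ulMHC_n$ as $\ulMOmega^n_{(A,f)}/d\ulMOmega^{n-1}_{(A,f)}$ plus shifted de Rham cohomology; a dual argument for $\ulMHNC$ gives the $\ker(d\colon\ulMOmega^n_{(A,f)}\to\ulMOmega^{n+1}_{(A,f)})$ summand in degree $n$ plus shifted cohomology in the opposite direction; and $\ulMHP_n\cong\varprojlim_S\ulMHC_{n+2k}$ yields the infinite product of shifted de Rham groups.

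The main obstacle is the upgrade of \cref{HKR} from a homology-level isomorphism to a quasi-isomorphism of mixed complexes matching $B$ with $d$. The divisor $(f)$ imposes only an $A$-linear constraint on the relevant chains, so the classical antisymmetrization arguments transfer, but the transfer must be verified explicitly on the normalized bar complex adapted to modulus pairs, and care must be taken that the hypothesis $f=f_1^{r_1}\cdots f_n^{r_n}$ (which already enters \cref{HKR}) is used consistently. Once this compatibility is established, the remaining steps are formal consequences of the Connes bicomplex formalism.
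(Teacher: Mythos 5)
Your proposal is correct and follows essentially the same route as the paper: the key step in both is to upgrade the modulus HKR isomorphism of \cref{HKR} to a chain-level equivalence of mixed complexes intertwining Connes' operator $B$ with the de Rham differential $d$ on $\ulMOmega^*_{(A,f)}$, after which the three formulas follow formally from the degenerate $(b,B)$-bicomplex. The only (immaterial) differences are that the paper uses the projection $e(a_0\otimes\cdots\otimes a_n)=\tfrac{1}{n!}a_0da_1\wedge\cdots\wedge da_n$ from chains to forms rather than your antisymmetrization in the opposite direction, and extracts the homology by directly totalizing the truncated bicomplexes instead of running the SBI long exact sequences.
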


Finally, we globalize this result and obtain realizations of Hochschild and (negative, periodic) cyclic homologies.

\begin{thm}[\cref{realization theorem}]\label{realization intro}
     For every strict normal crossing modulus pair $\mathcal{X}=(\overline{X},X^\infty)$, we have the following isomorphisms
     \begin{align*}
     &\mathrm{Hom}_{\ulMDMeff_k}(M(\mathcal{X}),\prod_{p\in\mathbb{Z}}\bfulMOmega^p[p-n])\cong\prod_{p\in\mathbb{Z}}H^{p-n}_{\Zar}(\overX,\ulMOmega^p_{\calX})\cong\ulMHH_n(\mathcal{X})\\
     &\mathrm{Hom}_{\ulMDMeff_k}(M(\mathcal{X}),\prod_{p\in\mathbb{Z}}\bfulMOmega^{\leq p}[2p-n])\cong\prod_{p\in\bbZ}\bbH^{2p-n}_{\Zar}(\overX,\ulMOmega^{\leq p}_{\calX})\cong\ulMHC_n(\mathcal{X})\\
     &\mathrm{Hom}_{\ulMDMeff_k}(M(\mathcal{X}),\prod_{p\in\mathbb{Z}}\bfulMOmega^{\geq p}[2p-n])\cong\prod_{p\in\bbZ}\bbH^{2p-n}_{\Zar}(\overX,\ulMOmega^{\geq p}_{\calX})\cong\ulMHNC_n(\mathcal{X})\\
     &\mathrm{Hom}_{\ulMDMeff_k}(M(\mathcal{X}),\prod_{p\in\mathbb{Z}}\bfulMOmega^\bullet[2p-n])\cong\prod_{p\in\bbZ}\bbH^{2p-n}_{\Zar}(\overX,\ulMOmega^{\bullet}_{\calX})\cong\ulMHP_n(\mathcal{X}).
     \end{align*}
 \end{thm}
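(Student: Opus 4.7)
The plan is to decompose each of the four chains of isomorphisms into two independent pieces. The first isomorphism in each row identifies a Hom out of $M(\calX)$ in $\ulMDMeff_k$ with Zariski (hyper)cohomology on $\overX$, and will be reduced to \cref{Hodge realization} together with formal manipulations (products, and distinguished triangles built from the stupid/canonical filtrations of the de Rham complex). The second isomorphism in each row identifies that (hyper)cohomology with the appropriate modulus Hochschild or cyclic homology of $\calX$, and will be obtained by globalizing the affine comparisons \cref{HKR} and \cref{isom for cyclic} by Zariski descent.

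For the Hochschild row, I would first apply \cref{Hodge realization} termwise: for each $p$ it gives $\mathrm{Hom}_{\ulMDMeff_k}(M(\calX),\bfulMOmega^p[p-n]) \cong H^{p-n}_\Zar(\overX,\ulMOmega^p_\calX)$, so pulling $\mathrm{Hom}$ past the product yields the first $\cong$. For the second $\cong$, I would sheafify $\ulMHH$ on the Zariski site of $\overX$: on an affine open $U=\Spec A\subset\overX$ on which $X^\infty$ is cut out by a product of prime elements, \cref{HKR} supplies a quasi-isomorphism of complexes $\ulMHH(A,f)\simeq\bigoplus_p \ulMOmega^p_{(A,f)}[p]$. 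Since the HKR map is natural under Zariski restriction of the affine chart, these local isomorphisms glue to a global quasi-isomorphism, and passing to Zariski hypercohomology collapses, degree by degree, to $\prod_p H^{p-n}_\Zar(\overX,\ulMOmega^p_\calX)$.

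For the cyclic, negative cyclic, and periodic cyclic rows the same two-step strategy applies, but with the single Hodge sheaf $\ulMOmega^p_\calX$ replaced by the Hodge filtration pieces $\ulMOmega^{\leq p}_\calX$, $\ulMOmega^{\geq p}_\calX$ and by the full complex $\ulMOmega^\bullet_\calX$. The motivic objects $\bfulMOmega^{\leq p}$, $\bfulMOmega^{\geq p}$, $\bfulMOmega^\bullet$ are assembled in $\ulMDMeff_k$ from the $\bfulMOmega^q$ using the de Rham differentials, so an inductive argument through the evident distinguished triangles (for example $\bfulMOmega^p[-p]\to\bfulMOmega^{\leq p}\to\bfulMOmega^{\leq p-1}$) and iteration of \cref{Hodge realization} produce the first $\cong$ as a product of Zariski hypercohomologies. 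The second $\cong$ then follows by sheafifying the affine statement \cref{isom for cyclic}, whose right-hand sides are precisely the stalkwise decompositions of the cyclic, negative cyclic, and periodic bicomplexes with respect to the Hodge filtration.

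The main obstacle will be the passage from affine to global together with the handling of the infinite products. On the sheaf side one must verify that the affine HKR and cyclic comparisons are sufficiently natural under Zariski localization of the modulus pair that they sheafify to a genuine quasi-isomorphism of complexes of Zariski sheaves; this amounts to checking functoriality of the modulus Hochschild and cyclic complexes under restriction to open affines on which $X^\infty$ is still a product of prime-power divisors, which is the natural modulus analogue of the classical Zariski-local nature of HH/HC/HP. On the motivic side one must verify that the infinite products appearing on the left exist in $\ulMDMeff_k$ and commute with $\mathrm{Hom}_{\ulMDMeff_k}(M(\calX),-)$; here the key observation, which makes everything well-defined, is that for a fixed finite-dimensional $\overX$ and a fixed $n$ only finitely many values of $p$ contribute a nonzero factor, so after applying $\mathrm{Hom}(M(\calX),-)$ the product is effectively finite, and no genuine completion issue arises.
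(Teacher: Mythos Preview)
Your proposal is correct and follows essentially the same strategy as the paper: the first isomorphism in each row is the representability of Hodge and de Rham (hyper)cohomology from \cref{Hodge realization} and \cref{de Rham realization}, and the second is the globalized HKR map $e$ tracked through the $(b,B)$-bicomplex. The only cosmetic difference is that the paper works directly with the sheaf-level equivalence $e:\calulMHH(\calX)\simeq\prod_p\ulMOmega^p_\calX[p]$ and the global $\mathrm{Tot}^{\prod}$ description of $(-)_{h\bbT}$, $(-)^{h\bbT}$, $(-)^{t\bbT}$ rather than phrasing it as sheafifying the affine statements \cref{HKR} and \cref{isom for cyclic}, but this is the same content.
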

 As a corollary, they satisfy blow-up invariance, and $\overline{\Box}$-invariance. Furthermore, if the divisor is empty    , we can recover the classical groups.

 \begin{cor}
     For any smooth, separated and of finite type $k$-scheme $X$, by taking $\calX=(X,\varnothing)$ in \cref{realization intro}, we have the following isomorphisms
     \begin{align*}
     &\mathrm{Hom}_{\ulMDMeff_k}(M(X,\varnothing),\prod_{p\in\mathbb{Z}}\bfulMOmega^p[p-n])\cong\prod_{p\in\mathbb{Z}}H^{p-n}_{\Zar}(X,\Omega^p_{X})\cong HH_n(X)\\
     &\mathrm{Hom}_{\ulMDMeff_k}(M(X,\varnothing),\prod_{p\in\mathbb{Z}}\bfulMOmega^{\leq p}[2p-n])\cong\prod_{p\in\bbZ}\bbH^{2p-n}_{\Zar}(X,\Omega^{\leq p}_{X})\cong HC_n(X)\\
     &\mathrm{Hom}_{\ulMDMeff_k}(M(X,\varnothing),\prod_{p\in\mathbb{Z}}\bfulMOmega^{\geq p}[2p-n])\cong\prod_{p\in\bbZ}\bbH^{2p-n}_{\Zar}(X,\Omega^{\geq p}_{X})\cong HC^-_n(X)\\
     &\mathrm{Hom}_{\ulMDMeff_k}(M(X,\varnothing),\prod_{p\in\mathbb{Z}}\bfulMOmega^\bullet[2p-n])\cong\prod_{p\in\bbZ}\bbH^{2p-n}_{\Zar}(X,\Omega^{\bullet}_{X})\cong HP_n(X).
     \end{align*}
 \end{cor}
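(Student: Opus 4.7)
The plan is to deduce the corollary as a direct specialization of \cref{realization intro} to the modulus pair $\calX=(X,\varnothing)$, with the main work being to identify each modulus object appearing in the statement with its classical counterpart when the divisor at infinity is empty.

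First I would observe that $(X,\varnothing)$ is trivially a strict normal crossing modulus pair, since the support condition on $|X^\infty|$ is vacuous and $X$ is assumed smooth. Thus \cref{realization intro} applies and yields, line by line, the first $\cong$ (the Hom in $\ulMDMeff_k$ to the Zariski hypercohomology of $\ulMOmega^p_\calX$ and its various truncations) and the second $\cong$ (to the corresponding modulus Hochschild/cyclic/negative cyclic/periodic homology). All that remains is to match these modulus sheaves and homologies with the classical ones.

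Second, I would identify the sheaves. When $X^\infty=\varnothing$, the corresponding ideal sheaf is $\calI=\calO_X$, so $\ulMO_\calX=\sqrt{\calI}\otimes\calI^{-1}=\calO_X$. Consequently
\[
\ulMOmega^q_\calX=\ulMO_\calX\otimes\Omega^q_{\overX}(\log X^\infty)=\Omega^q_X,
\]
and the brutal/canonical truncations $\ulMOmega^{\leq p}_\calX$, $\ulMOmega^{\geq p}_\calX$ and the full complex $\ulMOmega^{\bullet}_\calX$ reduce to the classical truncations $\Omega^{\leq p}_X$, $\Omega^{\geq p}_X$ and $\Omega^{\bullet}_X$ of the algebraic de Rham complex. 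This gives the equality of the middle hypercohomology group in each line with its classical analogue.

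Finally I would identify the modulus homology groups with the classical ones. Taking the trivial divisor corresponds locally to the affine modulus pair $(\Spec A,(1))$, and from the definition of $\ulMHH(A,f)$ and of the cyclic variants one reads off $\ulMHH_n(A,1)=HH_n(A)$, and similarly $\ulMHC_n=HC_n$, $\ulMHNC_n=HC_n^-$, $\ulMHP_n=HP_n$; globalizing by Zariski descent (which is built into the sheafified definitions used in the globalized theorem) gives the right-hand isomorphism. The only nontrivial verification here is that the definition of $\ulMHH(A,f)$ and its cyclic refinements truly degenerate to the classical complexes when $f$ is a unit, but this is immediate from their construction since the modulus correction terms are governed by powers of $f$ and become trivial. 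Once these three identifications are in hand, substituting them into \cref{realization intro} produces the four displayed isomorphisms of the corollary.
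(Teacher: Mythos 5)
Your proof is correct and is exactly the argument the paper intends: the corollary is stated as an immediate specialization of \cref{realization intro} to $\calX=(X,\varnothing)$, and the paper gives no further details. Your verifications — that $(X,\varnothing)$ is strict normal crossing, that $\ulMO_\calX=\calO_X$ and hence $\ulMOmega^q_\calX=\Omega^q_X$, and that with $f=1$ (equivalently $\calM_\calX=\calO_X^*$ a group, so repletion is trivial) the modulus Hochschild and cyclic complexes collapse to the classical sheafified ones — supply precisely the routine identifications the paper leaves implicit.
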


\section{Acknowledgement}

The author would like to thank Shane Kelly for proposing and supporting this research.

\section{Modulus de Rham Complex}\label{deRham}

We will construct the modulus de Rham complex $\bfulMOmega^\bullet$ in the category of motives with modulus $\ulMDMeff_k$. If $\calX=(\Spec A,(f))$, then global sections of $\ulMO_\calX$ and $\ulMOmega^q_\calX$ are the following sub-$A$-modules of $A[f^{-1}]$ and $\Omega^q_{A[f^{-1}]}$.

\begin{dfn}[{\cite[Definition 3.1]{KM23a}, \cite[Definition 3.1]{KM23b}}]
    For a $k$-algebra $A$, and a nonzero divisor $f\in A$, define
    \begin{align*}    \ulMO(A,f):=\frac{\sqrt{(f)}}{f}\subseteq A[f^{-1}],
    \end{align*}
    $\ulPOmega^*_{(A,f)}$ to be the smallest sub-dg-algebra of $\Omega^*_{A[f^{-1}]}$ containing $A=\Omega^0_{A/k}$ and $d\log a:=\frac{da}{a}$ for $a\in A\cap A[f^{-1}]^*$ and
    \[    \ulMOmega^q:=\ulMO(A,f)\cdot\ulPOmega^q_{(A,f)}\subseteq \Omega^q_{A[f^{-1}]}.
    \]
\end{dfn}

We will construct the de Rham differential $d:\bfulMOmega^q_\calX \to \bfulMOmega^{q+1}_\calX$.

\begin{lemma}
    If $f=f_1^{r_1}\cdots f_n^{r_n}$ is a product of prime elements, then the differential of $\Omega^\bullet_{A[f^{-1}]}$ induces the differential on $\ulMOmega^\bullet_{(A,f)}$.
\end{lemma}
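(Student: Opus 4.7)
The plan is to reduce, via the Leibniz rule, to the single statement $d(\ulMO(A,f)) \subseteq \ulMOmega^1_{(A,f)}$. Every element of $\ulMOmega^q_{(A,f)}$ is a sum of products $\alpha\beta$ with $\alpha \in \ulMO(A,f)$ and $\beta \in \ulPOmega^q_{(A,f)}$; since $\ulPOmega^\bullet_{(A,f)}$ is by construction a sub-dg-algebra of $\Omega^\bullet_{A[f^{-1}]}$, already $d\beta \in \ulPOmega^{q+1}_{(A,f)}$, whence $\alpha\,d\beta \in \ulMO\cdot\ulPOmega^{q+1} = \ulMOmega^{q+1}_{(A,f)}$. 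Then $d(\alpha\beta) = d\alpha\wedge\beta + \alpha\wedge d\beta$ shows that the claim follows once we know $d\alpha \in \ulMOmega^1_{(A,f)}$, since $d\alpha\wedge\beta \in \ulMO\cdot\ulPOmega^1\cdot\ulPOmega^q \subseteq \ulMOmega^{q+1}_{(A,f)}$.

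Given $\alpha = g/f$ with $g \in \sqrt{(f)}$, I would compute
\[
d\alpha \;=\; \frac{dg}{f} \;-\; \frac{g\,df}{f^2} \;=\; \frac{dg}{f} \;-\; \alpha\cdot d\log f.
\]
Each $f_i$ divides $f$, so $f_i \in A\cap A[f^{-1}]^*$ and by the definition of $\ulPOmega^\bullet$ one has $d\log f_i \in \ulPOmega^1_{(A,f)}$; hence $d\log f = \sum_i r_i\,d\log f_i \in \ulPOmega^1_{(A,f)}$ and $\alpha\cdot d\log f \in \ulMOmega^1_{(A,f)}$. To treat $dg/f$, I would use the primality of the $f_i$ to get $\sqrt{(f)} = (f_1\cdots f_n)$, write $g = h\cdot f_1\cdots f_n$ with $h \in A$, and expand by Leibniz to obtain
\[
\frac{dg}{f} \;=\; \frac{f_1\cdots f_n}{f}\,\Bigl(dh + h\sum_i d\log f_i\Bigr).
\]
The prefactor lies in $\sqrt{(f)}/f = \ulMO(A,f)$; the bracketed term lies in $\ulPOmega^1_{(A,f)}$ because $h \in A \subseteq \ulPOmega^0_{(A,f)}$, the complex $\ulPOmega^\bullet_{(A,f)}$ is closed under $d$, and each $d\log f_i$ is there by definition.

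The main obstacle is the ideal-theoretic identity $\sqrt{(f)} = (f_1\cdots f_n)$. For distinct prime elements $f_i$ one always has $\sqrt{(f)} = \bigcap_i (f_i)$, but $\bigcap_i (f_i) = (f_1\cdots f_n)$ requires the $f_i$ to be sufficiently independent, e.g.\ to form a regular sequence, as is automatic for the strict normal crossing modulus pairs with which the rest of the paper is concerned. Granting this, the calculation above shows that $d$ restricts to $\ulMOmega^\bullet_{(A,f)}$, and $d^2 = 0$ is inherited from $\Omega^\bullet_{A[f^{-1}]}$.
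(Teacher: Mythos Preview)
Your proof is correct and follows essentially the same route as the paper: reduce via Leibniz to $d(\ulMO(A,f))\subseteq\ulMOmega^1_{(A,f)}$, use $\sqrt{(f)}=(f_1\cdots f_n)$ to factor the numerator, and expand with $d\log f_i$. Your caveat about $\sqrt{(f)}=(f_1\cdots f_n)$ is slightly overstated---pairwise non-associate primes in a domain already give $\bigcap_i(f_i)=(f_1\cdots f_n)$, no regular-sequence condition is needed---and the paper simply asserts this identity without comment, relying on the ambient normal-crossing (hence locally factorial, hence locally integral) hypothesis used immediately afterward.
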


\begin{proof}
    Since $\ulMOmega^q_{(A,f)}=\ulMO(A,f)\cdot\ulPOmega^q_{(A,f)}$ and $\ulPOmega^\bullet_{(A,f)}$ is a complex, it is sufficient to show $d\ulMO(A,f)\subseteq \ulMOmega^1_{(A,f)}$. Since $\sqrt{f}=(f_1\cdots f_n)A$, for $\frac{a}{f}\in\ulMO(A,f)$, we have $a=a'g$ where $g=f_1\cdots f_n$. Thus
    \begin{align*}
    d\left(\frac{a}{f}\right)&=\frac{da}{f}-\frac{a}{f}d\log f\\
    &=\frac{g}{f}da'+\frac{a'}{f}dg-\frac{a}{f}d\log f\\
    &=\frac{g}{f}da'+\frac{a}{f}d\log g-\frac{a}{f}d\log f\in \ulMOmega^1_{(A,f)}.
    \end{align*}
\end{proof}
If $\mathcal{X}=(\overline{X},X^\infty)$ is a normal crossing modulus pair, then $\overline{X}$ is smooth, in particular locally factorial, so we can define the differential $d:\ulMOmega^*_{\calX}\to\ulMOmega^{*+1}_{\calX}$ by the following lemma.

\begin{lemma}\label{lem:factorization}
     Let $\Spec A$ be a Noetherian locally factorial affine scheme and $f\in A$ be an element. For any point $\mathfrak{p}\in\mathrm{Spec}(A)$, there is a neighborhood  $\mathfrak{p}\in\mathrm{Spec}(A[g^{-1}])$ such that $f=f_1^{r_1}\cdots f_n^{r_n}$ for some prime elements $f_j\in A[g^{-1}]$.
 \end{lemma}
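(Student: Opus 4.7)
The plan is to produce the factorization at the stalk $A_\mathfrak{p}$, which is a UFD by the locally factorial hypothesis, and then spread it out to a Zariski open neighborhood $\Spec A[g^{-1}]$ of $\mathfrak{p}$. The main technical ingredient will be a sublemma asserting that, for a Noetherian ring, if the stalk at a point is a domain then there is a principal Zariski open neighborhood on which the ring is already a domain.

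First I would produce the local factorization. In $A_\mathfrak{p}$, write $f = v\,\tilde{f}_1^{r_1}\cdots\tilde{f}_n^{r_n}$ with $v \in A_\mathfrak{p}^\times$ and the $\tilde{f}_j$ pairwise non-associated primes. Clearing denominators, each $\tilde{f}_j$ differs from the image of some $a_j \in A$ by a unit of $A_\mathfrak{p}$, so $a_j$ is itself prime in $A_\mathfrak{p}$. Absorbing these units into $v$ and writing $v = b/t$ with $b, t \in A \setminus \mathfrak{p}$, the identity $tf = b\,a_1^{r_1}\cdots a_n^{r_n}$ holds in $A_\mathfrak{p}$, hence in $A[u^{-1}]$ for some $u \notin \mathfrak{p}$. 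Setting $g_0 := utb$ yields $f = (b/t)\prod a_j^{r_j}$ in $A[g_0^{-1}]$ with $b/t$ a unit.

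Next I would spread primality for each factor. For each $j$, set $R_j := A/(a_j)$; it is Noetherian, and $(R_j)_{\overline{\mathfrak{p}}} \cong A_\mathfrak{p}/(\tilde{f}_j)$ is a domain. The sublemma, applied to $R_j$ and $\overline{\mathfrak{p}}$, then produces $g_j \notin \mathfrak{p}$ such that $A[g_j^{-1}]/(a_j)$ is a domain, i.e.\ $a_j$ is prime in $A[g_j^{-1}]$. To prove the sublemma, recall that a Noetherian ring has only finitely many minimal primes; those minimal primes of $R_j$ not contained in $\overline{\mathfrak{p}}$ one kills by inverting an element of each lying outside $\overline{\mathfrak{p}}$, and the unique minimal prime $\mathfrak{q}'$ contained in $\overline{\mathfrak{p}}$ (unique because $(R_j)_{\overline{\mathfrak{p}}}$ is a domain) satisfies $\mathfrak{q}'(R_j)_{\overline{\mathfrak{p}}} = 0$, so inverting simultaneous annihilators of its finitely many generators kills it as well; the resulting localization is reduced with unique minimal prime $(0)$, hence a domain.

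Finally, set $g := g_0 g_1 \cdots g_n$; in $A[g^{-1}]$ we obtain $f = v'\,a_1^{r_1}\cdots a_n^{r_n}$ with $v'$ a unit and each $a_j$ prime. To absorb the unit I would rewrite $v'a_1^{r_1} = (v'a_1)\,a_1^{r_1-1}$ and use that $v'a_1$ is still prime (as a unit times a prime). The degenerate case $f \in A_\mathfrak{p}^\times$ corresponds to the empty product; this is the only place where the statement is slightly imprecise, but it is harmless in the intended applications, where $f$ cuts out an effective Cartier divisor. The hard part of the argument is really the primality-spreading sublemma: although ``the stalk is a domain'' is not an open condition on a general scheme, the Noetherian hypothesis allows one to write down an explicit principal open neighborhood on which it persists, and the rest of the proof is then bookkeeping about clearing denominators.
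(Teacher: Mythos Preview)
Your proof is correct, and its overall shape---factor in the UFD $A_\mathfrak{p}$, then spread the factorization and the primality of each factor to a principal open---matches the paper's. The mechanism for spreading primality is different, however. The paper observes that the prime $(f_j)\subset A_\mathfrak{p}$ contracts to a prime $\mathfrak{q}_j\subset A$ contained in $\mathfrak{p}$; writing out finitely many generators $u_{jl}$ of $\mathfrak{q}_j$ and clearing denominators in the relations $u_{jl}\in (f_j)A_\mathfrak{p}$ and $f_j\in\mathfrak{q}_jA_\mathfrak{p}$ produces a single $g\notin\mathfrak{p}$ with $\mathfrak{q}_jA[g^{-1}]=f_jA[g^{-1}]$, and primality of $f_j$ in $A[g^{-1}]$ then follows immediately from primality of $\mathfrak{q}_j$ in $A$. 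Your route instead isolates a general spreading-out sublemma (a Noetherian ring whose stalk at a point is a domain is already a domain on a principal neighborhood) and applies it to each $A/(a_j)$. The paper's argument is shorter and avoids any auxiliary statement, exploiting directly the prime correspondence under localization; your version trades a few extra lines for a reusable principle and makes the geometric ``spreading out'' structure explicit. Both handle the unit and the degenerate $f\in A_\mathfrak{p}^\times$ case in the same (slightly informal) way.
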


 \begin{proof}
     Since $A_\mathfrak{p}$ is UFD, $f=f_1^{r_1}\cdots f_n^{r_n}$ for some prime elements $f_j\in A_\mathfrak{p}$. Let $\mathfrak{q}_j=(u_{j1},\dots,u_{jk}) \subseteq \mathfrak{p}$ be the prime ideal in $A$ such that $\mathfrak{q}_jA_\mathfrak{p}=f_jA_\mathfrak{p}$. Thus $u_{jl}=f_jv_{jl}$ for some $v_{jl}\in A_\mathfrak{p}$. We can write $f_j=s_j/g$, $v_{jl}=v_{jl}'/g$, and $f_j=\sum u_{jl}(w_{jl}/g)$ for some $s_j, v'_{jl}, w_{jl}\in A$ and $g\notin \mathfrak{p}$. Therefore $f=f_1^{r_1}\cdots f_n^{r_n}$ for $f_j=s_j/g\in A[g^{-1}]$ and each $f_j$ is a prime element because $\mathfrak{q}_jA[g^{-1}]=f_jA[g^{-1}]$.
 \end{proof}

\begin{prop}
    For every integer $q\geq0$, the differential
    \[
    d:\ulMOmega^q\to\ulMOmega^{q+1}
    \]
    is a morphism of sheaves with transfers.
\end{prop}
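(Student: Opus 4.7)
The plan is first to define $d$ as a morphism of Zariski sheaves on $\overline{X}$, and then to promote this to a morphism of sheaves with transfers. Since $\calX=(\overline{X},X^\infty)$ is strict normal crossing, $\overline{X}$ is smooth and in particular Noetherian and locally factorial. By \cref{lem:factorization}, every point of $\overline{X}$ has an affine open neighborhood $\Spec(A)$ on which a local equation $f$ of $X^\infty$ factors as a product of prime elements. On each such chart the preceding lemma supplies a differential $d:\ulMOmega^q_{(A,f)}\to\ulMOmega^{q+1}_{(A,f)}$ obtained by restriction of the K\"ahler differential on $\Omega^\bullet_{A[f^{-1}]}$. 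Since these local differentials are all restrictions of the universal de Rham differential on the open complement $\overline{X}\setminus|X^\infty|$, they automatically agree on overlaps and so glue to a morphism of Zariski sheaves $d:\ulMOmega^q_\calX\to\ulMOmega^{q+1}_\calX$.

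For transfer compatibility, I would exploit the fact that $\ulMOmega^q_\calX$ embeds as a subsheaf of $j_*\Omega^q_{\overline{X}\setminus|X^\infty|}$ via the open immersion $j:\overline{X}\setminus|X^\infty|\hookrightarrow\overline{X}$, and that the transfer structure on $\ulMOmega^q$ constructed in \cite{KM23a,KM23b} is the one inherited from the classical transfer structure on $\Omega^q$ over the smooth locus. For any admissible finite correspondence $\alpha:\calX\to\calY$ of modulus pairs, the action $\alpha^*$ on $\ulMOmega^q$ is by construction the restriction of the classical transfer along the underlying correspondence between the smooth loci. Since $d:\Omega^q\to\Omega^{q+1}$ is a morphism of sheaves with transfers in the classical sense on $\Smk$, and since the modulus differential constructed above is simply its restriction, the desired identity $\alpha^*\circ d=d\circ\alpha^*$ on $\ulMOmega^q$ follows by restricting the classical one to these subsheaves.

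The main obstacle I anticipate is verifying that the transfer structure on $\ulMOmega^q$ is genuinely the restriction of the classical transfer from $j_*\Omega^q$, and, dually, that the admissibility condition on modulus correspondences is exactly what forces the classical pullback to send $\ulMOmega^q_\calY$ into $\ulMOmega^q_\calX$ (so that both sides of $\alpha^*\circ d=d\circ\alpha^*$ remain inside the subsheaves after restriction). Once these two compatibilities are in hand, the proposition reduces to the classical fact that the de Rham differential commutes with transfers, combined with the Zariski-local gluing argument above.
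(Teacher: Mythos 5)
Your proposal follows essentially the same route as the paper: reduce to the injective embedding of $\ulMOmega^q$ into $\Omega^q$ of the open complement, use that the transfer structure on $\ulMOmega^q$ is inherited from the classical one there (the paper cites \cite[Theorem 4.3]{KM23b} for exactly the compatibility you flag as the main obstacle), and conclude from the classical fact that $d$ commutes with transfers on smooth schemes (cited as \cite[Theorem A.4.1]{KSYR} and \cite{LW09}). The Zariski-local gluing of $d$ via \cref{lem:factorization} is likewise how the paper sets up the differential before the proposition.
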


\begin{proof}
We need to show that the back square in the diagram
    \[
\begin{tikzcd}[row sep=tiny, column sep=tiny]
\ulMOmega^q(\mathcal{Y})\arrow[dd]\arrow[rr]\arrow[rd]&
&
\ulMOmega^{q+1}(\mathcal{Y})\arrow[dd]\arrow[rd]
\\
&
\Omega^{q}(Y^\circ)\arrow[rr,crossing over]&
&
\Omega^{q+1}(Y^\circ)\arrow[dd]
\\
\ulMOmega^q(\mathcal{X})\arrow[rr]\arrow[rd]&
&
\ulMOmega^{q+1}(\mathcal{X})\arrow[rd]
\\
&
\Omega^q(X^\circ)\arrow[rr]\arrow[uu,crossing over,leftarrow]&
&
\Omega^{q+1}(X^\circ).
\end{tikzcd}
\]
commutes for every modulus correspondence $W\in\underline{\mathbf{M}}\mathbf{Cor}(\mathcal{X},\mathcal{Y})$ for every normal crossing modulus pairs $\mathcal{X}=(\overline{X},X^\infty)$ and $\mathcal{Y}=(\overline{Y},Y^\infty)$, where $X^\circ=\overline{X}-X^\infty$, $Y^\circ=\overline{Y}-Y^\infty$. The top and the bottom squares commute by the definition of the differential on $\ulMOmega^*$. The left and the right squares commute by \cite[Theorem 4.3.]{KM23b}. Since $\ulMOmega^*(\mathcal{X})\to\Omega^*(X^\circ)$ and $\ulMOmega^*(\mathcal{Y})\to\Omega^*(Y^\circ)$ are injective, it is sufficient to show that the front square commutes. This case is checked in \cite[Theorem A.4.1]{KSYR}, and \cite{LW09}.
\end{proof}

\begin{thm}\label{de Rham realization}
    There exists a unique object $\underline{\mathbf{M}}\mathbf{\Omega}^\bullet\in\ulMDM^{\mathrm{eff}}_k$ such that for every strict normal crossing modulus pair $\mathcal{X}=(\overline{X},X^\infty)$ we have
    \[
    \mathrm{Hom}_{\ulMDMeff_k}(M(\mathcal{X}),\underline{\mathbf{M}}\mathbf{\Omega}^\bullet[n])\cong \mathbb{H}^n_{\mathrm{Zar}}(\overline{X},\ulMOmega^\bullet_{\mathcal{X}}). 
    \]
\end{thm}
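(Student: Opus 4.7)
The plan is to construct $\bfulMOmega^\bullet$ as the image of the explicit chain complex $(\ulMOmega^q,d)_{q\geq 0}$ of Nisnevich sheaves with transfers inside $\ulMDMeff_k$ (where $\ulMDMeff_k$ is realized as a Verdier localization of the derived category of such sheaves), and then to match the stupid-filtration spectral sequence computing $\mathrm{Hom}$ out of $M(\calX)$ against the Hodge-to-de~Rham spectral sequence on the cohomological side.

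The preceding proposition provides the de~Rham differentials $d:\ulMOmega^q\to\ulMOmega^{q+1}$ as morphisms of sheaves with transfers, and since the inclusion $\ulMOmega^*(\calX)\hookrightarrow \Omega^*(X^\circ)$ is injective, the relation $d\circ d=0$ is inherited from the classical de~Rham complex already at the level of sheaves with transfers. Setting $\bfulMOmega^\bullet$ to be the image of this bounded-below complex under the localization functor yields, by \cref{Hodge realization}, an object whose $p$-th graded piece is $\bfulMOmega^p$ with the correct Hom groups.

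Next I would compare two spectral sequences. The stupid filtration on $\bfulMOmega^\bullet$, whose graded pieces are $\bfulMOmega^p[-p]$, produces a bounded spectral sequence
\[
E_1^{p,q}=\mathrm{Hom}_{\ulMDMeff_k}(M(\calX),\bfulMOmega^p[q])\Longrightarrow \mathrm{Hom}_{\ulMDMeff_k}(M(\calX),\bfulMOmega^\bullet[p+q]).
\]
The analogous filtration on $\ulMOmega^\bullet_\calX$ gives the Hodge-to-de~Rham spectral sequence
\[
E_1^{p,q}=H^{q}_\Zar(\overX,\ulMOmega^p_\calX)\Longrightarrow \bbH^{p+q}_\Zar(\overX,\ulMOmega^\bullet_\calX).
\]
By \cref{Hodge realization} the $E_1$-pages are canonically isomorphic, and the $d_1$-differentials agree because both are induced by the sheafy $d:\ulMOmega^p\to\ulMOmega^{p+1}$ under the representability isomorphism. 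Both sequences are bounded (since $\ulMOmega^p_\calX=0$ for $p>\dim\overX$ once $\calX$ is fixed), hence convergent, and one extracts the required isomorphism at $E_\infty$. Uniqueness of $\bfulMOmega^\bullet$ follows from Yoneda, since the motives $\{M(\calX)\}$ indexed by strict normal crossing modulus pairs generate $\ulMDMeff_k$.

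The main obstacle will be verifying that the comparison of the two spectral sequences is natural at the level of differentials, not merely on the $E_1$-page; this hinges on the isomorphism of \cref{Hodge realization} being induced by a functor, so that it commutes with postcomposition by the sheafy $d$. A secondary subtlety is the careful setup of the stupid-filtration spectral sequence inside $\ulMDMeff_k$ (a triangulated category does not carry totalizations intrinsically), which I would handle by working at the chain-complex level before localization — where the filtration and its spectral sequence are unambiguous — and then descending to $\ulMDMeff_k$ using the on-the-nose representability of each $\bfulMOmega^p$.
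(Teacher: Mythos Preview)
Your proposal is correct and follows essentially the same approach as the paper: compare the Hodge-to-de~Rham spectral sequence with the stupid-filtration (extension) spectral sequence for $\bfulMOmega^\bullet$, identify the $E_1$-pages via \cref{Hodge realization}, and conclude. The paper's proof is terser than yours---it simply states that the two spectral sequences ``induce the isomorphism''---whereas you spell out the construction of $\bfulMOmega^\bullet$, the check that $d_1$-differentials match, convergence, and the Yoneda argument for uniqueness; these are exactly the details underlying the paper's one-line argument.
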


\begin{proof}
    The Hodge-to-de Rham spectral sequence
    \[E_1^{p,q}=H^q_{\mathrm{Zar}}(\overX,\ulMOmega^p_{\calX})\implies\mathbb{H}^{p+q}_{\Zar}(\calX,\ulMOmega^\bullet)
    \]
    and the spectral sequence of extensions 
    \[
    E_1^{p,q}=\mathrm{Hom}_{\ulMDMeff_k}(M(\mathcal{X}),\underline{\mathbf{M}}\mathbf{\Omega}^p[q])\implies \mathrm{Hom}_{\ulMDMeff_k}(M(\mathcal{X}),\underline{\mathbf{M}}\mathbf{\Omega}^\bullet[p+q])
    \]
    induce the isomorphism 
    \[
    \mathrm{Hom}_{\ulMDMeff_k}(M(\calX),\bfulMOmega^\bullet[n])\cong\mathbb{H}^n_{\mathrm{Zar}}(\overX,\ulMOmega^\bullet_{\calX})
    \]
    by \cref{Hodge realization}.
\end{proof}

\section{Modulus Hochschild Homology}

 Recall that the Hochschild homology $HH_*(A)$ of an $k$-algebra $A$ is the homology of the simplicial ring $HH(A)$ whose $n$th ring is $A^{\otimes n+1}$ and face operators $d_i:A^{\otimes n+1}\to A^{\otimes n+2}$ and degeneracy operators are 
 \begin{align*}
     d_i(a_0\otimes\cdots a_n)&=
     \begin{cases}
    a_0\otimes\cdots\otimes a_ia_{i+1}\otimes\cdots a_n       & \quad \text{if } 0\leq i\leq n-1\\
    a_na_0\otimes\cdots\otimes a_{n-1}  & \quad \text{if } i=n
  \end{cases}\\
  s_i(a_0\otimes\cdots a_n)&=a_0\otimes\cdots a_i\otimes 1\otimes a_{i+1}\otimes\cdots\otimes a_n.
 \end{align*}
 Let $b$ be its differential
 \begin{align*}
     b(a_0\otimes\cdots\otimes a_n)=\sum_{i=0}^{n-1}(-1)^ia_0\otimes\cdots\otimes a_ia_{i+1}\cdots\otimes a_n+(-1)^na_na_0\otimes\cdots\otimes a_{n-1}.
 \end{align*}
 The complex $HH(A)$ has a structure of a graded commutative dg-algebra by the shuffle product $\nabla$. Furthermore, if $A$ is a smooth $k$-algebra, then we have an HKR isomorphism of graded algebras
 \begin{align*}
     \Omega^*_{A/k}\xrightarrow{\sim}HH_*(A).
 \end{align*}
  In this section, we construct the modulus Hochschild homology $\uline{M}HH_q$ which is a Hochschild homology for modulus pairs and show the modulus HKR theorem.\\
  \indent We fix a field $k$.
 \begin{dfn}
     For a $k$-algebra $A$ and a nonzero divisor $f\in A$, we write $\ulPHH(A,f)$ for the sub-simplicial ring of $HH(A[f^{-1}])$ whose $n$th ring is the ring of sums of elements of the form
     \begin{align*}
         a_0m_0\otimes\cdots\otimes a_nm_n\in A[f^{-1}]^{\otimes n+1},
     \end{align*}
     where $a_i\in A,\ m_i\in A[f^{-1}]^*$ and $m_0\cdots m_n\in A$. Define sub-simplicial module
     \begin{align*}
         \ulMHH(A,f):=\uline{M}\mathcal{O}(A,f)\cdot\ulPHH(A,f)\subseteq HH(A[f^{-1}]),
     \end{align*}
     We call it \emph{modulus Hochschild homology}. Let $\uline{P}HH_*(A,f):=H_*(\ulPHH(A,f))$, $\ulMHH_*(A,f):=H_*(\ulMHH(A,f))$ to be the homology groups.
 \end{dfn}

 \begin{rmk}
     By definition, $\ulPHH(A,f)$ is a cdga by the shuffle product $\nabla$.
 \end{rmk}

 We will compare $\ulPHH$ with log Hochschild homology. Before proving that, we recall some definitions about pre-log rings.

 \begin{dfn}
     A pre-log ring $(A,M,\alpha)$ is a triple of a commutative ring $A$, a commutative monoid $M$, and a map of monoids $\alpha:M\to A$. A map of pre-log rings $(f,f^\flat):(A,M,\alpha)\to(B,N,\beta)$ is a pair of a map of rings $f:A\to B$ and a map of monoids $f^\flat:M\to N$, such that 
     \[
     \begin{tikzcd}
         M \ar[r,"f^\flat"] \arrow[d,"\alpha"] & N \ar[d,"\beta"] \\
         A \ar[r,"f"] & B
     \end{tikzcd}
     \]
     is commutative. For a surjective map of monoids $f:M\to N$, \emph{repletion} of $f$ is the pullback $M^\rep:=N\times_{N^\gp}M^\gp$. For any map $f:M\to N$, let $(\oplus_M^nN)^\rep$ be the repletion of $\oplus_M^nN\to N$.
 \end{dfn}

 \begin{prop}\label{comparison theorem of logHH}
     Let $(R,P)\to(A,M)$ be a map of pre-log rings. If there are factorizations $M=M_1\oplus M_2, P=P_1\oplus P_2$ such that $P\to M$ is the sum of the $P_i\to M_i$, $P_2$ and $M_2$ are groups, and $R\otimes_{\bbZ[P_1]}\bbZ[M_1]\to A$ is flat, then there is an equivalence\footnote{This equivalence holds for any map $(R,P)\to(A,M)$ in the derived sense \cite[Proposition 5.6]{logHH}.}
     \begin{align}\label{comparison}         A\otimes^{\mathbb{L}}_{(A\otimes_R A)\otimes_{\mathbb{Z}[M\oplus_PM]}\mathbb{Z}[(M\oplus_PM)^{\mathrm{rep}}]}A\simeq C((A,M)/(R,P)),
     \end{align}
     where $ C((A,M)/(R,P))$ is a complex such that $C_n((A,M)/(R,P))=A^{\otimes_R{n+1}}\otimes_{\mathbb{Z}[\oplus_P^{n+1}M]}\mathbb{Z}[(\oplus_P^{n+1}M)^{\mathrm{rep}}]$ with Hochschild differential.
 \end{prop}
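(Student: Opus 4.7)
The plan is to adapt the classical proof that $A\otimes^{\bbL}_{A\otimes_R A}A$ is computed by the Hochschild chain complex via the bar resolution to the log setting, systematically substituting repleted tensor products for ordinary ones. Write $U := (A\otimes_R A)\otimes_{\bbZ[M\oplus_P M]}\bbZ[(M\oplus_P M)^{\rep}]$ for the log enveloping algebra, and define a simplicial $U$-module $B_\bullet$ by
\[
B_n := A^{\otimes_R (n+2)}\otimes_{\bbZ[\oplus_P^{n+2}M]}\bbZ[(\oplus_P^{n+2}M)^{\rep}],
\]
with bar-type faces (multiplication of adjacent $A$-factors and of adjacent copies of $M$ on the repleted level) and degeneracies (unit insertion). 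The $U$-action uses the outermost two factors of $A$ and the outermost two copies of $M$; the augmentation $B_\bullet \to A$ is the total multiplication.

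First I would establish acyclicity of $B_\bullet \to A$ by constructing an extra degeneracy $s_{-1}$ that inserts $1_A$ (and the identity of $M$) into the leftmost slot; the standard simplicial contraction argument then gives a simplicial homotopy equivalence of underlying complexes, hence a quasi-isomorphism. Next I would verify that $B_n$ is flat over $U$ for each $n$. Using the splittings $M = M_1 \oplus M_2$ and $P = P_1 \oplus P_2$ with $M_2$, $P_2$ groups, repletion acts as the identity on the group part (group completion is an isomorphism on groups), so the repleted tensor product factors as a tensor product of a group-ring piece coming from the $M_2$-copies (automatically flat over the corresponding $\bbZ[P_2]$-algebra) and a piece involving only $M_1$ and $P_1$. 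Flatness of $B_n$ over $U$ then reduces by iterated base change to the hypothesis that $R\otimes_{\bbZ[P_1]}\bbZ[M_1] \to A$ is flat.

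Finally I would compute $A \otimes_U B_\bullet$, which by flatness models $A\otimes^{\bbL}_U A$. The outermost two $A$-factors and the outermost two $M$-copies—which carry the $U$-action—collapse against $A$, identifying $A^{\otimes_R(n+2)}$ with $A^{\otimes_R(n+1)}$ (via $a_0\otimes\cdots\otimes a_{n+1}\mapsto a_{n+1}a_0\otimes a_1\otimes\cdots\otimes a_n$) and reducing $(\oplus_P^{n+2}M)^{\rep}$ to $(\oplus_P^{n+1}M)^{\rep}$. The induced simplicial object with its alternating-sum differential is precisely $C_\bullet((A,M)/(R,P))$ with the Hochschild differential, yielding the asserted equivalence. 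The principal obstacle is the flatness verification in the second step: repletion is a pullback along group completion rather than a colimit, so its behaviour under tensor products along base change of commutative rings is subtle, and without some hypothesis $B_n$ need not be flat over $U$. The splittings are precisely what decouples the group component, where repletion is trivial and everything is free, from the monoid component, where the flatness assumption $R\otimes_{\bbZ[P_1]}\bbZ[M_1] \to A$ applies directly; the footnote's derived-category version avoids this delicacy by replacing flatness with a derived-tensor-product interpretation throughout.
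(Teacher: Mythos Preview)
Your approach is the paper's: both set up the repleted bar resolution $B_\bullet \to A$ (your $B_n$ is the paper's $B_{n+2}$, your $U$ is its $B_2$), contract it with an extra degeneracy, check that each term is flat over $U$ via the splitting, and then identify $A\otimes_U B_\bullet$ with $C_\bullet$. The one place your sketch is incomplete is the flatness verification. You write that, after peeling off the group part, ``flatness of $B_n$ over $U$ then reduces by iterated base change to the hypothesis that $R\otimes_{\bbZ[P_1]}\bbZ[M_1] \to A$ is flat.'' That hypothesis handles the $A$-tensor factors, but it does not by itself control the monoid-ring factor: one still needs $\bbZ[(\oplus_{P_1}^{n+2}M_1)^{\rep}]$ to be flat over $\bbZ[(M_1\oplus_{P_1}M_1)^{\rep}]$, and repletion being a pullback (as you yourself note) makes this nonobvious. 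The paper closes this gap with the explicit isomorphism $(\oplus_P^{n+1}M)^{\rep} \cong M \oplus (M^{\gp}/P^{\gp})^n$, which exhibits the larger monoid ring as a free module over the smaller one; the same formula is also what gives the identification $B_{n+2}\otimes_{U} A \cong B_{n+1}$ needed in your last step, which you assert but do not justify.
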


 \begin{proof}
     Let $B_n:=A^{\otimes_Rn}\otimes_{\mathbb{Z}[\oplus_P^nM]}\mathbb{Z}[(\oplus_P^nM)^{\mathrm{rep}}]$. The Bar resolution
     \[
     B\to A=(\cdots\to B_3\to B_2)\to A
     \]
     is a contractible augmented simplicial object. If $B_{n+2}$ is flat over $B_2$, that is, $B$ is a $B_2$-flat resolution of $A$, and if $B_{n+2}\otimes_{B_2}A\cong B_{n+1}$, then
     \[     A\otimes^{\bbL}_{B_2}A\simeq B\otimes_{B_2}A\cong  C((A,M)/(R,P)).
     \] 
    Therefore it is sufficient to show that $B_{n+2}$ is flat over $B_2$, and $B_{n+2}\otimes_{B_2}A\cong B_{n+1}$. 

     Since $\bigoplus_P^nM=\left(\bigoplus_{P_1}^nM_1\right)\oplus\left(\bigoplus_{P_2}^nM_2\right) $, and since $P_2$ and $M_2$ are groups,
     \begin{align}\label{chart}
     \Big(\bigoplus_P^nM\Big)^{\mathrm{rep}}=\Big(\bigoplus_{P_1}^nM_1\Big)^{\mathrm{rep}}\oplus\Big(\bigoplus_{P_2}^nM_2\Big)^{\mathrm{rep}}=\Big(\bigoplus_{P_1}^nM_1\Big)^{\mathrm{rep}}\oplus\Big(\bigoplus_{P_2}^nM_2\Big).
     \end{align}
     Let $R':=R\otimes_{\bbZ[P_1]}\bbZ[M_1]$. Then
     \begin{align}\label{replete Bar construction}
         B_n\cong A^{\otimes_{R'}n}\otimes_{\mathbb{Z}[M_1]}\mathbb{Z}[(\oplus_{P_1}^nM_1)^{\mathrm{rep}}]
     \end{align}
     because
     \begin{align*}
         B_n&=A^{\otimes_Rn}\otimes_{\mathbb{Z}[\oplus_P^nM]}\mathbb{Z}[(\oplus_P^nM)^{\mathrm{rep}}]\\
         &\cong A^{\otimes_Rn}\otimes_{\mathbb{Z}[\oplus_{P_1}^nM_1]}\mathbb{Z}[(\oplus_{P_1}^nM_1)^{\mathrm{rep}}]\\
         &\cong A^{\otimes_{R'}n}\otimes_{R'}(R'^{\otimes_Rn})\otimes_{\mathbb{Z}[\oplus_{P_1}^nM_1]}\mathbb{Z}[(\oplus_{P_1}^nM_1)^{\mathrm{rep}}]\\
         &\cong A^{\otimes_{R'}n}\otimes_{\mathbb{Z}[M_1]}(\mathbb{Z}[M_1]^{\otimes_{\mathbb{Z}[P_1]}n})\otimes_{(\mathbb{Z}[M_1]^{\otimes_{\mathbb{Z}[P_1]}n})}\mathbb{Z}[(\oplus_{P_1}^nM_1)^{\mathrm{rep}}]\\
         &\cong  A^{\otimes_{R'}n}\otimes_{\mathbb{Z}[M_1]}\mathbb{Z}[(\oplus_{P_1}^nM_1)^{\mathrm{rep}}],
     \end{align*}
     where the third isomorphism is induced by $(-)\otimes_RR'=(-)\otimes_R(R\otimes_{\mathbb{Z}[P_1]}\mathbb{Z}[M_1])\cong(-)\otimes_{\mathbb{Z}[P_1]}\mathbb{Z}[M_1]$ and $\mathbb{Z}[\oplus_{P_1}^nM_1]\cong\mathbb{Z}[M_1]^{\otimes_{\mathbb{Z}[P_1]}n}$. Since 
     \[
     B_{n+2}\cong  A^{\otimes_{R'}n}\otimes_{R'}((A\otimes_{R'}A)\otimes_{\mathbb{Z}[M_1]}\mathbb{Z}[(M_1\oplus_{P_1}M_1)^{\mathrm{rep}}])\otimes_{\mathbb{Z}[(M_1\oplus_{P_1}M_1)^{\mathrm{rep}}]}\mathbb{Z}[(\oplus_{P_1}^nM_1)^{\mathrm{rep}}],
     \]
     for any $B_2$-module $N$
     \[
     B_{n+2}\otimes_{B_2}N\cong  A^{\otimes_{R'}n}\otimes_{R'}N\otimes_{\mathbb{Z}[(M_1\oplus_{P_1}M_1)^{\mathrm{rep}}]}\mathbb{Z}[(\oplus_{P_1}^{n+2}M_1)^{\mathrm{rep}}].
     \]
     Since $A$ is flat over $R'$, it is sufficient to show $\mathbb{Z}[(\oplus_{P_1}^{n+2}M_1)^{\mathrm{rep}}]$ is flat over $\mathbb{Z}[(M_1\oplus_{P_1}M_1)^{\mathrm{rep}}]$. However, for any homomorphism of monoids $P\to M$, $(\oplus_P^{n+1}M)^{\mathrm{rep}}\cong M\oplus (M^{\mathrm{gp}}/P^{\mathrm{gp}})^n$ (cf. \cite[Lemma 5.10.]{logHH}). Therefore
     \[
     N\otimes_{\mathbb{Z}[(M_1\oplus_{P_1}M_1)^{\mathrm{rep}}]}\mathbb{Z}[(\oplus_{P_1}^{n+2}M_1)^{\mathrm{rep}}]\cong N\otimes_{\bbZ[M_1]}\bbZ[M_1\oplus (M_1^{\mathrm{gp}})^n].
     \]
      Since $A$ is flat over $R'$ by assumption, $B_{n+2}$ is flat pver $B_n$. Furthermore, if $N=A$, that isomorphism says $B_{n+2}\otimes_{B_2}A\cong B_{n+1}$.
 \end{proof}
 
 The left hand side of \cref{comparison} is the definition of log Hochschild homology in \cite[Definition 5.3]{logHH}. We will check that if $(A,f)$ is strict normal crossing and $f=f_1^{r_1}\cdots f_n^{r_n}$ is a product of prime elements, then we have an isomorphism
 \begin{align}\label{comparison1}
     C((A,M)/(R,P))\xrightarrow{\sim} \ulPHH(A,f),
 \end{align}
 where $(A,M)=(A,A\cap A[f^{-1}]^*)$, $(R,P)=(k,k^*)$, and thus we have an equivalence
 \begin{align}\label{comparison2}
     A\otimes^{\mathbb{L}}_{(A\otimes_R A)\otimes_{\mathbb{Z}[M\oplus_PM]}\mathbb{Z}[(M\oplus_PM)^{\mathrm{rep}}]}A\simeq \ulPHH(A,f).
 \end{align}
 There is a canonical surjection of simplicial rings $C((A,M)/(R,P))\to \ulPHH(A,f)$. To show it is injective, it is sufficient to show both inject into $C(A[f^{-1}])$, but for the target this is the definition. By assumption, $M=A^*\times f_1^{\bbN}\times\cdots\times f_n^{\bbN}\cong \bbN^n\oplus A^*$ and $P=0\oplus k^*$. By \cref{replete Bar construction}, 
 \[
 B_{m+1}\cong A^{\otimes_{R'}m+1}\otimes_{\bbZ[M_1]}\bbZ[(M_1^m)^{\mathrm{rep}}]\cong A^{\otimes_{R'}m+1}\otimes_{R'}R'[(M_1^m)^{\mathrm{rep}}],
 \]
 where $M_1=\bbN^n$, $M_2=A^*$, $P_1=0$, $P_2=k^*$, and $R'=k[M_1]\cong k[x_1,\dots,x_n]$. If $A$ is flat over $R'=k[M_1]$, then  there is a injection $A^{\otimes_{R'}n+1}\otimes_{R'}R'[(M_1^n)^{\mathrm{rep}}]\to A^{\otimes_{R'}n+1}\otimes_{R'}R'[(M_1^n)^{\mathrm{gp}}]\cong A[f^{-1}]^{\otimes n+1}$. Thus to use \cref{comparison theorem of logHH},  it remains to show the flatness of $R'\to A$.

 \begin{lemma}\label{local coodinate}
     For any normal crossing modulus pair $(A,f)$ such that $f=f_1^{r_1}\cdots f_n^{r_n}$ for prime elements $f_1,\dots,f_n\in A$, there exists an \'etale covering $\{\mathrm{Spec}(A_i)\to\mathrm{Spec}(A)\}$ such that $R'=k[x_1,\dots,x_n]\to A\to A_i$ factors through $k[x_1,\dots,x_n]\to k[y_1,\dots,y_m]\to A_i$ where the first map is flat, and the second map is \'etale. In particular, $R'\to A$ is flat.
 \end{lemma}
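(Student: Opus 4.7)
The plan is to reduce to a pointwise statement and patch: for each $\mathfrak{p} \in \Spec A$ I would produce an étale neighborhood carrying the required factorization, and take the resulting collection as the étale cover. Fix $\mathfrak{p}$, set $S := \{j : f_j \in \mathfrak{p}\}$, and after reordering assume $S = \{1, \dots, s\}$. As a first reduction I would invert $\prod_{j>s} f_j$ on $A$ — a Zariski, hence étale, open immersion — so that $f_{s+1}, \dots, f_n$ become units on the resulting ring.

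By the strict normal crossing hypothesis, $f_1, \dots, f_s$ form part of a regular system of parameters at $\mathfrak{p}$ in the regular local ring $A_\mathfrak{p}$. I would extend to a full regular system $(f_1, \dots, f_s, h_{s+1}, \dots, h_m)$ with $m = \dim A_\mathfrak{p}$, lift $h_{s+1}, \dots, h_m$ to $A$, and shrink once more if necessary. The Jacobian criterion for smooth $k$-schemes then says that
\[
\psi \colon k[y_1, \dots, y_m] \longrightarrow A, \qquad y_j \mapsto f_j \ (j \le s),\quad y_j \mapsto h_j \ (j > s),
\]
is étale at $\mathfrak{p}$, hence étale on a further open neighborhood which I declare to be $\Spec A_i$.

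To construct the first map $\phi \colon k[x_1, \dots, x_n] \to k[y_1, \dots, y_m]$, I would put $\phi(x_j) := y_j$ for $j \le s$, which forces $\psi \circ \phi(x_j) = f_j$ in those indices. For $j > s$, the element $f_j$ is a unit in $A_i$, and I would refine the étale cover further until each such $f_j$ equals $\psi(P_j)$ for some $P_j \in k[y_1, \dots, y_m]$; then set $\phi(x_j) := P_j$. The most transparent arrangement is to take the $y_j$'s to include $f_{s+1}, \dots, f_n$ themselves, in which case $\phi$ is the polynomial subring inclusion $x_j \mapsto y_j$, automatically free and hence flat. Étaleness of $\psi$ combined with the flatness of $\phi$ gives flatness of $R' \to A_i$, and fpqc descent along the étale cover $\{\Spec A_i \to \Spec A\}$ yields the ``in particular'' conclusion.

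The delicate step is exactly this last one: incorporating each unit $f_j$ ($j > s$) into an étale coordinate system requires $df_1, \dots, df_n$ to be $A_i$-linearly independent in $\Omega^1_{A_i/k}$. For $j \le s$ this is precisely the content of the SNC hypothesis; for $j > s$ I would enforce it by a further étale refinement that splits the residue field extension at $\mathfrak{p}$ and replaces each unit $f_j$ by $f_j - c_j$, where $c_j \in A_i$ lifts the residue class $f_j(\mathfrak{p})$, so that $f_j - c_j$ lands in the maximal ideal and can be adjoined to the regular system of parameters.
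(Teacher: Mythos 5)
There are two genuine gaps. The first and most serious: the lemma is stated for a \emph{normal crossing} pair, but your argument silently upgrades this to \emph{strict} normal crossing when you assert that $f_1,\dots,f_s$ form part of a regular system of parameters at $\mathfrak{p}$. For a merely normal crossing pair this fails: a prime element $f_j$ of $A$ can cut out an irreducible divisor with several branches at a point (e.g.\ the nodal curve $f_j=y^2-x^2(x+1)$ in $k[x,y]$), and then $f_j\in\mathfrak{m}_\mathfrak{p}^2$ is not a regular parameter. One must first pass to an \'etale cover $\Spec B\to\Spec A$ on which the pair becomes strict normal crossing, and there each $f_j$ factors as a \emph{product} $f_j=u\cdot b_{j_1}\cdots b_{j_d}$ of several new primes $b_{\bullet}$ which serve as the coordinates. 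Consequently the map $k[x_1,\dots,x_n]\to k[y_1,\dots,y_m]$ must send $x_j$ to a monomial $y_{j_1}\cdots y_{j_d}$ rather than to a single variable, and the flatness of $k[x]\to k[y_1,\dots,y_d]$, $x\mapsto y_1\cdots y_d$, has to be established separately. This refactorization of the primes under the \'etale cover --- controlled in the paper by the identity $(f_1\cdots f_n)B'=\sqrt{fB'}=(b_1\cdots b_l)B'$ from \cite[Lemma 3.5]{KM23a} --- is the central content of the paper's proof and is entirely absent from yours.

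Second, your treatment of the indices $j>s$ does not go through, as you yourself half-suspect. Adjoining $f_j-c_j$ to the regular system of parameters requires $f_j-c_j\notin\mathfrak{m}_\mathfrak{p}^2$ and $df_j$ to be independent of $df_1,\dots,df_s$ in $\mathfrak{m}_\mathfrak{p}/\mathfrak{m}_\mathfrak{p}^2$; both can fail. Take $A=k[u,v]$, $f_1=u$, $f_2=u+v^2+1$ at the origin: then $c_2=1$ and $d(f_2-1)=du+2v\,dv\equiv df_1 \pmod{\mathfrak{m}_\mathfrak{p}}$. No \'etale refinement can repair this, since linear independence of the $df_j$ at $\mathfrak{p}$ is intrinsic to $A$. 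So the unit factors $f_j$ cannot in general be absorbed into an \'etale coordinate system, and a different argument is needed to show that they do not destroy flatness of $k[x_1,\dots,x_n]\to A_i$. (This point is genuinely delicate --- the paper itself disposes of it only with the phrase that the argument ``does not change with \dots\ multiplication by units'' --- but your proposed fix is not a proof of it.)
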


 \begin{proof}
     First, we construct an \'etale covering $\{\Spec A_i\to\Spec A\}$ with \'etale morphisms $\Spec A_i\to \bbA^m$. By definition, there is an \'etale covering $\{\mathrm{Spec}(B)\to\mathrm{Spec}(A)\}$ with $(B,f)$ strict normal crossing. For each maximal ideal $\mathfrak{m}\in\mathrm{Spec}(B)$, there is a regular system of parameters $b_1,\dots b_m\in\mathfrak{m}B_{\mathfrak{m}}$ such that $f=b_1^{s_1}\cdots b_l^{s_l}$. Then $b_1,\dots b_m\in\mathfrak{m}-\mathfrak{m}^2$ are prime elements, and 
     \[     \Omega_{B_{\mathfrak{m}}/k}=\bigoplus_{i=1}^nB_{\mathfrak{m}}db_i.
     \]
     By \cref{lem:factorization}, there is a $g\notin \mathfrak{m}$ such that $b_1,\dots, b_n \in B[g^{-1}]$ are primes, $f=b_1^{s_1}\cdots b_l^{s_l}$, and 
     \[     \Omega_{B[g^{-1}]/k}=\bigoplus_{i=1}^nB[g^{-1}]db_i.
     \]
     Since $B':=B[g^{-1}]$ is finite type over $k$, there is a surjection $S:=k[y_1,\dots,y_m,z_1,\dots,z_c]\to B'$ with $y_i\mapsto b_i$. Let $I$ be its kernel. Since $B'$ is smooth over $k$, there are short exact sequences
     \[
        \begin{tikzcd}
            0 \ar[r] & I/I^2 \ar[r] \ar[d] & \Omega_{S/k}\otimes_SB' \ar[r] \ar[d] & \Omega_{B'/k} \ar[r] \ar[d] & 0\\
            0 \ar[r] & \bigoplus_{j=1}^cB'dz_j \ar[r] & (\bigoplus_{j=1}^cB'dz_j)\oplus(\bigoplus_{i=1}^mB'dy_i) \ar[r] & \bigoplus_{i=1}^mB'dy_i \ar[r] & 0,
        \end{tikzcd}
     \]
     where vertical maps are isomorphimsms. So there is a basis $h_1,\dots, h_c\in I/I^2$. By Nakayama's lemma (\cite[\href{https://stacks.math.columbia.edu/tag/00DV}{00DV}(3)]{stacks-project}), there is a $h\in 1+I$ such that $hI\subset (h_1,\dots ,h_c)$, $I[h^{-1}]=(h_1,\dots, h_c)[h^{-1}]$. Since $h=1$ in $B'$,
     \[
     B'\cong S/I\cong S[h^{-1}]/I[h^{-1}]\cong k[y_1,\dots,y_m,z_1,\dots,z_{c+1}]/(h_1,\dots,h_c,hz_{c+1}-1).
     \]
     Let $h_{c+1}:=hz_{c+1}-1$ and $J:=(h_1,\dots,h_{c+1})$. By \cite[\href{https://stacks.math.columbia.edu/tag/08JZ}{08JZ}(1)]{stacks-project}, $J/J^2$ is free with basis $h_1,\dots h_{c+1}$. By the short exact sequence 
     \[
     0\to J\to S':=k[y_1,\dots,y_m,z_1,\dots,z_{c+1}]\to B'\to 0
     \]
     we obtain short exact sequences and vertical isomorphisms again
     \[
        \begin{tikzcd}
            0 \ar[r] & J/J^2 \ar[r] \ar[d] & \Omega_{S'/k}\otimes_{S'}B' \ar[r] \ar[d] & \Omega_{B'/k} \ar[r] \ar[d] & 0\\
            0 \ar[r] & \bigoplus_{j=1}^{c+1}B'dz_i \ar[r] & (\bigoplus_{j=1}^{c+1}B'dz_j)\oplus(\bigoplus_{i=1}^mB'dy_i) \ar[r] & \bigoplus_{i=1}^mB'dy_i \ar[r] & 0.
        \end{tikzcd}
     \]
     Since left vertical map is a invertible matrix $(\partial h_i/\partial z_j)$, $B'\cong S'/J$ is a standard smooth [Stacks, 00T6]. Then $k[y_1,\dots,y_m]\to B'$ is also standard smooth and relative dimension $0$, so it is \'etale.\\
     \indent Next, we show the above construction induces a factorization of $R'=k[x_1,\dots,x_n]\to A\to B'$. By \cite[Lemma 3.5]{KM23a}, $(f_1\cdots f_n)B'=\sqrt{fA}B'=\sqrt{fB'}=(b_1\cdots b_l)B'$. Since the above argument does not change with the reordering and the multiplication by units, we may assume $f_i=\prod_{k=j_{i-1}+1}^{j_i} b_j$. Thus $R'\to B'$ factors through
     \[
     R'=k[x_1,\dots,x_n]\to k[y_1,\dots,y_m]\to B',
     \]
     where the first map sends $x_i$ to $\prod_{k=j_{i-1}+1}^{j_i} y_j$. This map is a tensor product of maps of the forms $k[x]\to k[y_1,\dots, y_d]:x\mapsto y_1\cdots y_d$ or $k\to k[y_1,\dots,y_d]$. These maps are flat, so the desired map is flat.\\
     \indent Since flatness is an \'etale local property (\cite[\href{https://stacks.math.columbia.edu/tag/03MM}{03MM}]{stacks-project}), $R'\to A$ is flat.
 \end{proof}
 
 We denote $(A\otimes_RA)^{\mathrm{rep}}_{M\oplus_PM}:=(A\otimes_R A)\otimes_{\mathbb{Z}[M\oplus_PM]}\mathbb{Z}[(M\oplus_PM)^{\mathrm{rep}}]$.

 \begin{lemma}\label{flat base change}
     If maps of pre-log rings $(R,P)\to(A,M)\to(B,N)$ induce flat ring maps $A\to B$ and $(A\otimes_RA)^{\mathrm{rep}}_{M\oplus_PM}\to(B\otimes_RB)^{\mathrm{rep}}_{N\oplus_PN}$, then
     \[ (A\otimes^{\bbL}_{(A\otimes_RA)^{\mathrm{rep}}_{M\oplus_PM}}A)\otimes_AB\simeq (B\otimes_AB)^{\mathrm{rep}}_{N\oplus_MN}\otimes^{\mathbb{L}}_{(B\otimes_R B)^{\mathrm{rep}}_{N\oplus_PN}}B.
     \]
 \end{lemma}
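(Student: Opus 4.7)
The strategy is a derived flat base change along $A\to B$. Write $A^e_R:=(A\otimes_R A)^{\rep}_{M\oplus_P M}$, $B^e_R:=(B\otimes_R B)^{\rep}_{N\oplus_P N}$, and $B^e_A:=(B\otimes_A B)^{\rep}_{N\oplus_M N}$ for brevity. I would carry out four steps: (i) move $-\otimes_A B$ inside the derived tensor using flatness of $A\to B$; (ii) decompose $B\simeq B^e_R\otimes^{\bbL}_{B^e_R} B$ and associate; (iii) strip the derived decoration from $A\otimes^{\bbL}_{A^e_R}B^e_R$ using the flatness hypothesis on $A^e_R\to B^e_R$; (iv) identify the resulting ordinary tensor $A\otimes_{A^e_R}B^e_R$ with $B^e_A$.

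For (i)--(iii): since $A\to B$ is flat, the functor $-\otimes_A B$ is exact, so it commutes with derived tensors; together with $A\otimes_A B=B$ this gives $(A\otimes^{\bbL}_{A^e_R}A)\otimes_A B\simeq A\otimes^{\bbL}_{A^e_R}B$. Rewriting $B\simeq B^e_R\otimes^{\bbL}_{B^e_R}B$ and invoking associativity of the derived tensor along the $A^e_R$-$B^e_R$-bimodule $B^e_R$ yields $A\otimes^{\bbL}_{A^e_R}B\simeq (A\otimes^{\bbL}_{A^e_R}B^e_R)\otimes^{\bbL}_{B^e_R}B$. By the flatness hypothesis on $A^e_R\to B^e_R$, the first factor simplifies to the ordinary tensor $A\otimes_{A^e_R}B^e_R$, reducing the lemma to step (iv).

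For step (iv), I would use a universal property argument. The natural maps $A\to B^e_A$ (through $A\to B\hookrightarrow B\otimes_A B\to B^e_A$) and $B^e_R\to B^e_A$ (induced by the quotients $B\otimes_R B\twoheadrightarrow B\otimes_A B$ and $(N\oplus_P N)^{\rep}\twoheadrightarrow (N\oplus_M N)^{\rep}$) agree on $A^e_R$, yielding a canonical comparison $A\otimes_{A^e_R}B^e_R\to B^e_A$. Conversely, a pair of ring maps $A\to C$ and $B^e_R\to C$ agreeing on $A^e_R$ forces the two induced maps $B\rightrightarrows C$ (coming from the two tensor factors of $B\otimes_R B\subseteq B^e_R$) to agree on $A$, so $B\otimes_R B\to C$ factors through $B\otimes_A B$; the analogous compatibility on the monoid part forces $\bbZ[(N\oplus_P N)^{\rep}]\to C$ to factor through $\bbZ[(N\oplus_M N)^{\rep}]$. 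This last claim amounts to the monoid pushout identity $M\oplus_{(M\oplus_P M)^{\rep}}(N\oplus_P N)^{\rep}\cong (N\oplus_M N)^{\rep}$, which is a direct computation using the explicit decomposition $(\oplus_P^{n+1}M)^{\rep}\cong M\oplus (M^{\gp}/P^{\gp})^n$ recalled in the proof of \cref{comparison theorem of logHH}. I expect this final monoid bookkeeping to be the only nonformal piece; the rest is a routine flat base change in the derived category.
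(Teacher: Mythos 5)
Your argument follows the same chain as the paper's proof: use flatness of $A\to B$ to rewrite $(A\otimes^{\bbL}_{(A\otimes_RA)^{\mathrm{rep}}_{M\oplus_PM}}A)\otimes_AB$ as $A\otimes^{\bbL}_{(A\otimes_RA)^{\mathrm{rep}}_{M\oplus_PM}}B$, then associate along $(B\otimes_RB)^{\mathrm{rep}}_{N\oplus_PN}$ and use the flatness hypothesis to replace the derived tensor $A\otimes^{\bbL}_{(A\otimes_RA)^{\mathrm{rep}}_{M\oplus_PM}}(B\otimes_RB)^{\mathrm{rep}}_{N\oplus_PN}$ by the underived one. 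The only divergence is your step (iv): the paper simply cites \cite[Proposition 5.4]{logHH} for the identification $A\otimes_{(A\otimes_RA)^{\mathrm{rep}}_{M\oplus_PM}}(B\otimes_RB)^{\mathrm{rep}}_{N\oplus_PN}\cong(B\otimes_AB)^{\mathrm{rep}}_{N\oplus_MN}$, where you instead give a direct universal-property argument reducing it to a monoid pushout computation; both are valid.
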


 \begin{proof}
    By assumption, 
    \begin{align*}    (A\otimes^{\bbL}_{(A\otimes_RA)^{\mathrm{rep}}_{M\oplus_PM}}A)\otimes_AB&\simeq (A\otimes^{\bbL}_{(A\otimes_RA)^{\mathrm{rep}}_{M\oplus_PM}}B)\\
    &\simeq (A\otimes_{(A\otimes_RA)^{\mathrm{rep}}_{M\oplus_PM}}(B\otimes_R B)^{\mathrm{rep}}_{N\oplus_PN})\otimes^{\bbL}_{(B\otimes_R B)^{\mathrm{rep}}_{N\oplus_PN}}B\\
    &\simeq (B\otimes_AB)^{\mathrm{rep}}_{N\oplus_MN}\otimes^{\mathbb{L}}_{(B\otimes_R B)^{\mathrm{rep}}_{N\oplus_PN}}B,
    \end{align*}
    where the third equivalence is \cite[Proposition 5.4.]{logHH}.
 \end{proof}

 \begin{prop}[Zariski descent]\label{Zariski descent}
 If $(A,f)$ is a normal crossing modulus pair such that $f=f_1^{r_1}\cdots f_n^{r_n}$ is a product of prime elements, then for any multiplicatively closed subset $S\subset A$, there is an equivalence
     \[     \ulPHH(S^{-1}A,f)\simeq\ulPHH(A,f)\otimes_AS^{-1}A.
     \]
 \end{prop}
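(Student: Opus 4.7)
My plan is to reduce the statement to the flat base change result \cref{flat base change}, after identifying both sides of the desired equivalence with derived tensor products via \cref{comparison theorem of logHH}.

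Write $M := A \cap A[f^{-1}]^*$ and $N := S^{-1}A \cap (S^{-1}A)[f^{-1}]^*$, viewed as pre-log structures on $A$ and $S^{-1}A$ respectively, and consider the tower of pre-log rings $(k, k^*) \to (A, M) \to (S^{-1}A, N)$. First I verify the hypotheses of \cref{comparison theorem of logHH} for each of $(A, M)$ and $(S^{-1}A, N)$. For $(A, M)$ this is precisely \cref{local coodinate}. For $(S^{-1}A, N)$ the same lemma applies after observing that each $f_i$ is either still prime in $S^{-1}A$ or becomes a unit (and in the latter case can be absorbed into $(S^{-1}A)^*$ without affecting the combinatorics of the monoid decomposition). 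Consequently \cref{comparison2} provides the derived tensor product descriptions
\[
\ulPHH(A, f) \simeq A \otimes^{\mathbb{L}}_{(A\otimes_k A)^{\rep}_{M\oplus_{k^*}M}} A,
\quad
\ulPHH(S^{-1}A, f) \simeq S^{-1}A \otimes^{\mathbb{L}}_{(S^{-1}A\otimes_k S^{-1}A)^{\rep}_{N\oplus_{k^*}N}} S^{-1}A.
\]

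Next I apply \cref{flat base change} to the tower. The flatness of $A \to S^{-1}A$ is clear. For the flatness of the map $(A\otimes_k A)^{\rep}_{M\oplus_{k^*}M} \to (S^{-1}A\otimes_k S^{-1}A)^{\rep}_{N\oplus_{k^*}N}$, I invoke the explicit formula \cref{replete Bar construction} with $n = 2$: both replete tensor products factor through the polynomial ring $R' = k[\bbN^n]$, so the map reduces to the localization $A \otimes_{R'} A \to S^{-1}A \otimes_{R'} S^{-1}A$, which is flat.

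It remains to match the right-hand side of \cref{flat base change}, namely $(S^{-1}A \otimes_A S^{-1}A)^{\rep}_{N \oplus_M N} \otimes^{\mathbb{L}}_{(S^{-1}A \otimes_k S^{-1}A)^{\rep}_{N\oplus_{k^*} N}} S^{-1}A$, with the derived description of $\ulPHH(S^{-1}A, f)$ above. Since localization is a ring epimorphism we have $S^{-1}A \otimes_A S^{-1}A \cong S^{-1}A$; moreover, by the computation $(N \oplus_M N)^{\rep} \cong N \oplus (N^{\gp}/M^{\gp})$ from \cite[Lemma 5.10]{logHH}, the extra group factor $N^{\gp}/M^{\gp}$ consists of images of units of $(S^{-1}A[f^{-1}])^*$ modulo $A[f^{-1}]^*$, whose representatives become units in $S^{-1}A$, so the replete tensor product $(S^{-1}A\otimes_A S^{-1}A)^{\rep}_{N\oplus_M N}$ collapses to $S^{-1}A$. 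This produces the desired equivalence $\ulPHH(A,f) \otimes_A S^{-1}A \simeq \ulPHH(S^{-1}A, f)$. The main obstacle is precisely this last identification: tracking the extra units introduced in passing from $M$ to $N$ and confirming that they trivialize the monoid contribution to the replete tensor product; a secondary subtlety is ensuring that \cref{comparison theorem of logHH} applies to $(S^{-1}A, N)$ when some of the $f_i$ become units in $S^{-1}A$, which requires redoing the \'etale-local argument of \cref{local coodinate} in the localized presentation.
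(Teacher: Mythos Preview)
Your approach is the same as the paper's: reduce to \cref{flat base change} via the derived-tensor-product description of \cref{comparison2}, then show $(S^{-1}A\otimes_A S^{-1}A)^{\rep}_{N\oplus_M N}$ collapses to $S^{-1}A$. The difference lies in how the flatness hypothesis of \cref{flat base change} is checked, and that is where your argument has a gap.

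You assert that by \cref{replete Bar construction} ``both replete tensor products factor through the polynomial ring $R'=k[\bbN^n]$, so the map reduces to the localization $A\otimes_{R'}A\to S^{-1}A\otimes_{R'}S^{-1}A$.'' But the monoid entering the decomposition of $N$ is not $\bbN^n$: after reordering so that $f_1,\dots,f_i$ become units in $S^{-1}A$ while $f_{i+1},\dots,f_n$ stay prime, one has $N_1\cong\bbZ^i\oplus\bbN^{n-i}$, and \cref{replete Bar construction} applied to $(S^{-1}A,N)$ yields a tensor over $\bbZ[N_1]$ with monoid factor $\bbZ[(N_1\oplus N_1)^{\rep}]$, not the same monoid factor as on the $A$-side. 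To conclude flatness one must show that the transition map between these two monoid constructions is an isomorphism, i.e.\ that
\[
(N_1\oplus N_1)\oplus_{(M_1\oplus M_1)}(M_1\oplus M_1)^{\rep}\;\cong\;(N_1\oplus N_1)^{\rep},
\]
which the paper verifies by an explicit computation with $\bbZ^i\oplus\bbN^{n-i}$. Only then does the map $(A\otimes_kA)^{\rep}_{M\oplus_{k^*}M}\to(S^{-1}A\otimes_kS^{-1}A)^{\rep}_{N\oplus_{k^*}N}$ become a base change of the flat localization $A\otimes_kA\to S^{-1}A\otimes_kS^{-1}A$. In short, the step you treated as immediate is where the actual monoid bookkeeping lives, while the ``obstacles'' you flagged at the end (the collapse of $(N\oplus_MN)^{\rep}$ and the applicability of \cref{comparison theorem of logHH} to $S^{-1}A$) are handled by essentially the same kind of computation and are no harder.
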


 \begin{proof}
     By \cref{comparison2}, 
     \begin{align}
     \ulPHH(A,f)\simeq A\otimes^{\bbL}_{(A\otimes_kA)^\rep_{M\oplus_{k^*}M}}A,
     \end{align}
     same holds for $(S^{-1}A,f)$. To use the \cref{flat base change}, we check that 
     \begin{align}\label{base chanege dondition map}(A\otimes_kA)^\rep_{M\oplus_{k^*}M}\to(S^{-1}A\otimes_kS^{-1}A)^\rep_{N\oplus_{k^*}N}
     \end{align}
     is flat, where $M=A\cap A[f^{-1}]*$, $N=S^{-1}A\cap S^{-1}A[f^{-1}]*$. Assume $f_1,\dots,f_i\in N$, $f_{i+1}\dots,f_n\notin N$. Since $f_1,\dots,f_i\in N$ are primes in $A$, the inclusion 
     \[
     f_1^\bbZ\times\cdots\times f_i^\bbZ\to (S^{-1}A)^*
     \]
     has a section. Thus 
     \[
     N=(S^{-1}A)^*\times f_{i+1}^\bbN\times\cdots\times f_n^\bbN=N_2\times f_1^\bbZ\times\cdots\times f_i^\bbZ\times f_{i+1}^\bbN\times\cdots\times f_n^\bbN
     \]
     for some abelian group $N_2$. Let $N_1\cong \bbZ^i\oplus\bbN^{n-i}$ be the other summand of $N$. Then the map $M\to N$ is a sum of $M_1=\prod_{j=1}^n f_j^\bbN\to\prod_{j=1}^if_j^\bbZ\times\prod_{j=i+1}^nf_j^\bbN=N_1$ and $M_2=A^*\to N_2$. We have
     \begin{align*}         (A\otimes_kA)^\rep_{M\oplus_{k^*}M}&\cong(A\otimes_kA)\otimes_{\bbZ[M_1\oplus M_1]}\bbZ[(M_1\oplus M_1)^\rep]\\     &\to(S^{-1}A\otimes_kS^{-1}A)\otimes_{\bbZ[M_1\oplus M_1]}\bbZ[(M_1\oplus M_1)^\rep]\\
     &\cong (S^{-1}A\otimes_kS^{-1}A)\otimes_{\bbZ[N_1\oplus N_1]}\bbZ[N_1\oplus N_1]\otimes_{\bbZ[M_1\oplus M_1]}\bbZ[(M_1\oplus M_1)^\rep]\\
     &\xrightarrow{\sim}(S^{-1}A\otimes_kS^{-1}A)\otimes_{\bbZ[N_1\oplus N_1]}\bbZ[(N_1\oplus N_1)^\rep]\\
     &\cong (S^{-1}A\otimes_kS^{-1}A)^\rep_{N\oplus_{k^*}N},
     \end{align*}
     where \cref{chart} implies the first and last isomorphisms. The second map is flat because $A\to S^{-1}A$ is flat. Thus to show that \cref{base chanege dondition map} is flat, it is sufficient to show the forth map is an isomorphism.\\
     \indent Since $N_1=\bbZ^i\oplus \bbN^{n-i}$, 
     \begin{align*}
     (N_1\oplus N_1)^\rep=(\bbZ^i\oplus\bbZ^i)^\rep\oplus(\bbN^{n-i}\oplus\bbN^{n-i})^\rep=(\bbZ^i\oplus\bbZ^i)\oplus(\bbN^{n-i}\oplus\bbN^{n-i})^\rep
     \end{align*}
     Since $M_1=\bbN^i\oplus\bbN^{n-i}$,
     \begin{align*}
         &(N_1\oplus N_1)\oplus_{(M_1\oplus M_1)}(M_1\oplus M_1)^\rep\\         \cong&\left((\bbZ^i\oplus\bbZ^i)\oplus_{(\bbN^i\oplus\bbN^i)}(\bbN^i\oplus\bbN^i)^\rep\right)\oplus\left((\bbN^{n-i}\oplus\bbN^{n-i})\oplus_{(\bbN^{n-i}\oplus\bbN^{n-i})}(\bbN^{n-i}\oplus\bbN^{n-i})^\rep\right)\\
         \cong&(\bbZ^i\oplus\bbZ^i)\oplus(\bbN^{n-i}\oplus\bbN^{n-i})^\rep\\
         \cong&(N_1\oplus N_1)^{\rep}.
     \end{align*}
     Since \cref{base chanege dondition map} is flat, by \cref{flat base change}, we have
     \begin{align*}         (A\otimes^{\bbL}_{(A\otimes_kA)^\rep_{M\oplus_{k^*}M}}A)\otimes_AS^{-1}A\simeq(S^{-1}A\otimes_AS^{-1}A)^\rep_{N\oplus_MN}\otimes^\bbL_{(S^{-1}A\otimes_kS^{-1}A)^\rep_{N\oplus_{k^*}N}}S^{-1}A.
     \end{align*}
     However, $S^{-1}A\otimes_AS^{-1}A\cong S^{-1}A$, and
     \begin{align*}
         (N\oplus_MN)^\rep&=((N_1\oplus_{M_1}N_1)\oplus(N_2\oplus_{M_2}N_2))^\rep\\
         &=((\bbZ^i\oplus_{\bbN^i}\bbZ^i)\oplus(\bbN^{n-i}\oplus_{\bbN^{n-i}}\bbN^{n-i})\oplus(N_2\oplus_{M_2}N_2))^\rep\\
         &=(\bbZ^i\oplus_{\bbN^i}\bbZ^i)^{\rep}\oplus(\bbN^{n-i}\oplus_{\bbN^{n-i}}\bbN^{n-i})^{\rep}\oplus(N_2\oplus_{M_2}N_2)^\rep\\
         &=\bbZ^{i}\oplus\bbN^{n-i}\oplus(N_2\oplus_{M_2}N_2)\\
         &=N\oplus_MN.
     \end{align*}
     Hence $(S^{-1}A\otimes_AS^{-1}A)^\rep_{N\oplus_MN}\cong S^{-1}A$ and 
     \begin{align*}(A\otimes^{\bbL}_{(A\otimes_kA)^\rep_{M\oplus_{k^*}M}}A)\otimes_AS^{-1}A&\simeq(S^{-1}A\otimes_AS^{-1}A)^\rep_{N\oplus_MN}\otimes^\bbL_{(S^{-1}A\otimes_kS^{-1}A)^\rep_{N\oplus_{k^*}N}}S^{-1}A\\
     &\simeq S^{-1}A\otimes^\bbL_{(S^{-1}A\otimes_kS^{-1}A)^\rep_{N\oplus_{k^*}N}}S^{-1}A.
     \end{align*}
 \end{proof}

 \begin{prop}[\'Etale descent]\label{etale descent}
     Let $(A,f)$ be a strict normal crossing modulus pair. Suppose that $A\to B$ is an \'etale morphism and $f$ is a product of prime elements in $A$ and $B$. Then
     \[     \ulPHH(B,f)\simeq\ulPHH(A,f)\otimes_AB.
     \]
 \end{prop}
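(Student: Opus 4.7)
The strategy is to mirror the proof of \cref{Zariski descent}. By \cref{comparison2} applied to both strict normal crossing pairs,
\[
\ulPHH(A,f)\simeq A\otimes^{\bbL}_{(A\otimes_kA)^\rep_{M\oplus_{k^*}M}}A,\qquad \ulPHH(B,f)\simeq B\otimes^{\bbL}_{(B\otimes_kB)^\rep_{N\oplus_{k^*}N}}B,
\]
with $M=A\cap A[f^{-1}]^*$ and $N=B\cap B[f^{-1}]^*$. I would then apply \cref{flat base change} to the tower of pre-log rings $(k,k^*)\to(A,M)\to(B,N)$. This reduces the claim to three items: (a) flatness of $A\to B$, automatic from étaleness; (b) flatness of $(A\otimes_kA)^\rep_{M\oplus_{k^*}M}\to(B\otimes_kB)^\rep_{N\oplus_{k^*}N}$; and (c) the identification $(B\otimes_AB)^\rep_{N\oplus_MN}\simeq B$, which turns the right-hand side of the equivalence produced by \cref{flat base change} into exactly $\ulPHH(B,f)$.

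For (b) and (c), the structural input I would use is that, under étaleness and strict normal crossing on both sides, the prime factorization of $f$ in $B$ refines that in $A$: writing $f=f_1^{r_1}\cdots f_n^{r_n}$ in $A$ and $f=g_1^{s_1}\cdots g_m^{s_m}$ in $B$, each ideal $(f_j)B$ must be the product of a subset $\{g_l:l\in S_j\}$ of the prime factors of $f$ in $B$ (since $(f_j)B$ is radical and the components of $V(f)\subset\Spec B$ lie over those of $V(f)\subset\Spec A$), yielding a partition $\{1,\dots,m\}=S_1\sqcup\cdots\sqcup S_n$ and units $u_j\in B^*$ with $f_j=u_j\prod_{l\in S_j}g_l$. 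Decomposing $M=A^*\oplus M_1$ and $N=B^*\oplus N_1$ as in \cref{Zariski descent}, with $M_1\cong\bbN^n$ and $N_1\cong\bbN^m$, the induced map $M_1\to N_1$ is the folding map determined by this partition together with the unit twists $u_j$. Granting this explicit description, flatness in (b) should be extracted by a monoid-algebra computation paralleling the one at the end of \cref{Zariski descent}.

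The main obstacle is (c). In the Zariski case, the quotient $N^\gp/M^\gp$ vanished so that $(N\oplus_MN)^\rep$ collapsed to $N\oplus_MN$, and then $S^{-1}A\otimes_AS^{-1}A\simeq S^{-1}A$ closed the argument. In the étale case neither simplification is available: $N^\gp/M^\gp\cong\bbZ^{m-n}\oplus(B^*/A^*)$ is nontrivial and $B\otimes_AB\not\simeq B$. The mechanism I would hope to run is that tensoring $B\otimes_AB$ with $\bbZ[(N\oplus_MN)^\rep]$ over $\bbZ[N\oplus_MN]$ identifies the two tensor copies of each $g_l$, so the product collapses onto the diagonal $B$ — a reduction consistent with étaleness making $\Spec B\hookrightarrow\Spec(B\otimes_AB)$ a connected component. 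To make this rigorous, my plan is to invoke \cref{local coodinate}, reducing étale-locally on $B$ to the toric model $A=k[x_1,\dots,x_n]$, $B=k[y_1,\dots,y_m]$ with $x_i=\prod_{l\in S_i}y_l$, in which both $B\otimes_AB$ and the replete quotient admit completely explicit descriptions, and then glue back up.
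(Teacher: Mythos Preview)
Your overall strategy via \cref{flat base change} matches the paper's, but your plan for items (b) and (c) has a real gap: the ``toric model'' $A=k[x_1,\dots,x_n]\to B=k[y_1,\dots,y_m]$ with $x_i=\prod_{l\in S_i}y_l$ is \emph{not} \'etale once any $|S_i|>1$ (it has positive relative dimension), so \cref{local coodinate} cannot reduce the \'etale map $A\to B$ to that situation. The lemma you cite produces \'etale charts for the structure map $k[x]\to A$, not for the given $A\to B$; there is no mechanism here for replacing $A\to B$ by a map of polynomial rings while preserving \'etaleness.

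The paper sidesteps the whole ``refinement of prime factorizations'' issue by first invoking \cref{Zariski descent} to reduce to the case where $A\to B$ is an \'etale map of \emph{local} rings. Since $\mathfrak{m}_AB=\mathfrak{m}_B$, a regular system of parameters of $A$ is already one for $B$, so the non-unit part of the monoid is the \emph{same} $L=x_1^{\bbN}\times\cdots\times x_i^{\bbN}$ on both sides; there is no splitting of primes to track. This makes (b) immediate: $(B\otimes_kB)^\rep_{N\oplus_{k^*}N}\cong(B\otimes_kB)\otimes_{(A\otimes_kA)}(A\otimes_kA)^\rep_{M\oplus_{k^*}M}$. For (c), since $L\oplus_LL=L$ one gets $(N\oplus_MN)^\rep=N\oplus_MN$ and hence $(B\otimes_AB)^\rep_{N\oplus_MN}\cong B\otimes_AB$. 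Now the \'etale diagonal gives $B\otimes_AB\cong B\times C$, so \cref{flat base change} yields $\ulPHH(A,f)\otimes_AB\simeq\ulPHH(B,f)\times\bigl(C\otimes^{\bbL}_{(B\otimes_kB)^\rep}B\bigr)$; the extra factor is killed by localizing at maximal ideals of $(B\otimes_kB)^\rep$ and observing that $B_{\mathfrak m}$ and $C_{\mathfrak m}$ cannot both be nonzero. Your intuition that the replete tensor ``collapses onto the diagonal $B$'' is close in spirit, but the actual argument does not force $(B\otimes_AB)^\rep$ to equal $B$; rather it equals $B\times C$ and one must separately show the $C$-summand contributes nothing.
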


 \begin{proof}
     By \cref{comparison theorem of logHH} and \cref{comparison2}, $\ulPHH(A,f)$ and $\ulPHH(B,f)$ are equivalent to the complexes $A\otimes^{\bbL}_{(A\otimes_kA)^{\mathrm{rep}}_{M\oplus_{k^*}M}}A$ and $B\otimes^{\bbL}_{(B\otimes_kB)^{\mathrm{rep}}_{N\oplus_{k^*}N}}B$, where $M=A\cap A[f^{-1}]^*$, and $N=B\cap B[f^{-1}]^*$. By \cref{Zariski descent}, we may assume that $\varphi :A\to B$ is an \'etale homomorphism of local rings. Let $\mathfrak{m}_A$ be a maximal ideal of $A$ and $x_1,\dots,x_n$ be its regular system of parameters. By \cite[\href{https://stacks.math.columbia.edu/tag/09CC}{09CC}]{stacks-project}, $s_1,\dots,s_n$ is a regular system of parameters in Noetherian local ring $(S,\mathfrak{m})$ if and only if $(s_1,\dots,s_n)=\mathfrak{m}$ and the homology of the Koszul complex $H_i(K_\bullet(s_1,\dots,s_n))=0$ for $i\neq 0$. Since flat local homomorphisms are faithful and since $\mathfrak{m}_AB=\mathfrak{m}_B$, $x_1,\dots,x_n$ is also a regular system of parameters of $B$. Therefore $M=A\cap A[f^{-1}]^*=A^*\times x_1^\bbN\times\cdots\times x_i^\bbN$ and $N=B\cap B[f^{-1}]^*=B^*\times x_1^\bbN\times\cdots\times x_i^\bbN$. Let $L:=x_1^\bbN\times\cdots\times x_i^\bbN$. Thus
     \begin{align*}       (B\otimes_kB)^\rep_{N\oplus_{k^*}N}&\cong(B\otimes_kB)^\rep_{L\oplus L}\\
     &\cong (B\otimes_kB)\otimes_{(A\otimes_kA)}(A\otimes_kA)^\rep_{L\oplus L}\\
     &\cong (B\otimes_kB)\otimes_{(A\otimes_kA)}(A\otimes_kA)^\rep_{M\oplus_{k^*}M}
     \end{align*}
     is flat over $(A\otimes_kA)^\rep_{M\oplus_{k^*}M}$, where the first and third morphisms are isomorphisms by \cref{replete Bar construction} for $n=2$. By \cref{flat base change}, 
     \[     (A\otimes^{\bbL}_{(A\otimes_RA)^{\mathrm{rep}}_{M\oplus_PM}}A)\otimes_AB\simeq (B\otimes_AB)^{\mathrm{rep}}_{N\oplus_MN}\otimes^{\mathbb{L}}_{(B\otimes_R B)^{\mathrm{rep}}_{N\oplus_PN}}B.
     \]
     Since $N\oplus_MN\cong (B^*\oplus_{A^*}B^*)\oplus(L\oplus_L L)\cong(B^*\oplus_{A^*}B^*)\oplus L$, $(N\oplus_MN)^\rep\cong N\oplus_MN$. Therefore $(B\otimes_AB)^{\mathrm{rep}}_{N\oplus_MN}\cong B\otimes_AB\cong B\times C$ because $A\to B$ is \'etale. Hence
     \begin{align*}
         \ulPHH(A,f)\otimes_AB&\simeq (B\times C)\otimes^{\mathbb{L}}_{(B\otimes_k B)^{\mathrm{rep}}}B\\
         &\simeq\ulPHH(B,f)\times (C\otimes^{\mathbb{L}}_{(B\otimes_k B)^{\mathrm{rep}}}B).
     \end{align*}
     For any maximal ideal $\mathfrak{m}$ of $(B\otimes_kB)^{\mathrm{rep}}$, $(C\otimes^{\mathbb{L}}_{(B\otimes_k B)^{\mathrm{rep}}}B)\otimes_{(B\otimes_k B)^{\mathrm{rep}}}(B\otimes_k B)^{\mathrm{rep}}_\mathfrak{m}\simeq C_\mathfrak{m}\otimes^{\mathbb{L}}_{(B\otimes_k B)^{\mathrm{rep}}_\mathfrak{m}}B_{\mathfrak{m}}$. Since $(B\otimes_k B)^{\mathrm{rep}}\to B\otimes_AB\cong B\times C$ is surjective, $B_{\mathfrak{m}}=0$ or $C_{\mathfrak{m}}=0$. Therefore $C\otimes^{\mathbb{L}}_{(B\otimes_k B)^{\mathrm{rep}}}B\simeq0$ and we get an desired isomorphism.
 \end{proof}

 \begin{lemma}
     If $\ulMO(A,f)$ is a flat $A$-module, then the canonical map
     \[     \ulMO(A,f)\otimes_A\ulPHH_q(A,f)\to\ulMHH_q(A,f)
     \]
     is an isomorphism for any $q\geq0$. In particular if $A$ is locally factorial, the above map is an isomorphism.
 \end{lemma}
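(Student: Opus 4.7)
The plan is to upgrade the assertion from homology groups to a degree-wise isomorphism on the underlying simplicial $A$-modules. Concretely, I would show that the multiplication map
\[
\mu \colon \ulMO(A,f) \otimes_A \ulPHH(A,f) \longrightarrow \ulMHH(A,f),
\]
sending $x \otimes (y_0 \otimes \cdots \otimes y_n)$ to $(xy_0) \otimes y_1 \otimes \cdots \otimes y_n$ in each simplicial degree, is an isomorphism of simplicial $A$-modules whenever $\ulMO(A,f)$ is flat; compatibility with face and degeneracy maps is a direct check using that $\ulMO \subseteq A[f^{-1}]$ and that $A[f^{-1}]$ is commutative. Once this is in hand, flatness guarantees that $\ulMO(A,f) \otimes_A -$ commutes with homology, so
\[
H_q\bigl(\ulMO \otimes_A \ulPHH\bigr) \;\cong\; \ulMO \otimes_A \ulPHH_q,
\]
and passing to $H_q$ in $\mu$ produces the claimed isomorphism.

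In simplicial degree $n$, surjectivity of $\mu_n$ is immediate from $\ulMHH_n = \ulMO \cdot \ulPHH_n$. For injectivity I would factor $\mu_n$ (composed with the inclusion $\ulMHH_n \hookrightarrow A[f^{-1}]^{\otimes n+1}$) as
\[
\ulMO \otimes_A \ulPHH_n \;\hookrightarrow\; \ulMO \otimes_A A[f^{-1}]^{\otimes n+1} \;\xrightarrow{m}\; A[f^{-1}]^{\otimes n+1},
\]
with $m$ the multiplication in the leading tensor factor. The first arrow is injective by flatness of $\ulMO$ applied to $\ulPHH_n \hookrightarrow A[f^{-1}]^{\otimes n+1}$. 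For the second, writing $A[f^{-1}]^{\otimes n+1} = A[f^{-1}] \otimes_k A[f^{-1}]^{\otimes n}$ with $A$-structure from the leading factor, it is enough to identify $\ulMO \otimes_A A[f^{-1}]$ with $A[f^{-1}]$. Since $A \subseteq \ulMO \subseteq A[f^{-1}]$ (as $1 = f/f \in \ulMO$) and $A \to A[f^{-1}]$ is a flat ring epimorphism, flatness of $\ulMO$ embeds $\ulMO \otimes_A A[f^{-1}]$ into $A[f^{-1}] \otimes_A A[f^{-1}] = A[f^{-1}]$, while the image contains $A \otimes_A A[f^{-1}] = A[f^{-1}]$; hence $m$ is an isomorphism and $\mu_n$ is injective.

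For the final sentence, I would deduce flatness of $\ulMO(A,f)$ from local factoriality by showing it is locally free of rank one. By \cref{lem:factorization}, Zariski locally on $\mathrm{Spec}\, A$ one can write $f = f_1^{r_1}\cdots f_n^{r_n}$ with $f_j$ prime, and a short UFD computation gives $\sqrt{(f)} = (f_1\cdots f_n)$ locally; multiplication by the unit $\tfrac{f_1\cdots f_n}{f}$ of $A[f^{-1}]$ then defines a local $A$-module isomorphism $A \xrightarrow{\sim} \ulMO(A,f)$, using only that $f$ is a non-zero-divisor. The main technical point is the identification $\ulMO \otimes_A A[f^{-1}] \cong A[f^{-1}]$: without flatness of $\ulMO$ this map is only surjective, so the flatness hypothesis is used essentially, not merely to commute $\otimes_A \ulMO$ with homology.
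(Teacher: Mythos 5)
Your argument is correct and is essentially the paper's proof: both reduce to showing the degree-wise surjection $\ulMO(A,f)\otimes_A\ulPHH(A,f)\to\ulMHH(A,f)$ is injective by embedding it into $HH(A[f^{-1}])$ via the identification $\ulMO(A,f)\otimes_A A[f^{-1}]\cong A[f^{-1}]$, and then use flatness to commute $\ulMO(A,f)\otimes_A-$ with homology; you additionally supply the ``in particular'' step (local factoriality $\Rightarrow$ $\ulMO$ locally free of rank one) that the paper leaves implicit. One small correction: the injectivity of $\ulMO(A,f)\otimes_A A[f^{-1}]\to A[f^{-1}]\otimes_A A[f^{-1}]$ comes from flatness of $A[f^{-1}]$ over $A$ (exactness of localization), not from flatness of $\ulMO(A,f)$.
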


 \begin{proof}
     Since $\ulMO(A,f)$ is a flat $A$-module, $\ulMO(A,f)\otimes_A\ulPHH_q(A,f)\cong H_q(\ulMO(A,f)\otimes_A\ulPC(A,f))$. Thus it is sufficient to show that the canonical surjection
     \[
     \ulMO(A,f)\otimes\ulPHH(A,f)\to\ulMHH(A,f)
     \]
     of chain complexes is injective. However, there is a  commutative diagram
     \[
     \begin{tikzcd}
         \ulMO(A,f)\otimes_A\ulPHH(A,f) \ar[r] \arrow[d] & \ulMHH(A,f) \ar[d] \\
         \ulMO(A,f)\otimes_A HH(A[f^{-1}]) \ar[r] & HH(A[f^{-1}]).
     \end{tikzcd}
     \]
     The left map is injective because $\ulMO(A,f)$ is flat. The bottom map is an isomorphism by $\ulMO(A,f)\otimes_A A[f^{-1}]\cong A[f^{-1}]$. Therefore the top map is injective.
 \end{proof}
 
 \begin{prop}
     For any modulus pair $(A,f)$, we have isomorphisms $\uline{P}\Omega^1_{(A,f)}\cong\uline{P}HH_1(A,f)$ and $\uline{M}\Omega^1_{(A,f)}\cong\uline{M}HH_1(A,f)$.
 \end{prop}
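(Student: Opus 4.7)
The plan is to construct HKR-style maps $\Phi$ and $\Psi$ between $\ulPHH_1(A,f)$ and $\ulPOmega^1_{(A,f)}$, verify they are mutually inverse, and then extend to the modulus level by $\ulMO$-linearity.

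First I would define $\Phi: \ulPHH(A,f)_1 \to \ulPOmega^1_{(A,f)}$ via the HKR formula
\[
\Phi(a_0 m_0 \otimes a_1 m_1) := a_0 m_0 \, d(a_1 m_1) = (a_0 m_0 m_1)\, da_1 + (a_0 a_1 m_0 m_1)\, d\log m_1.
\]
Since $m_0 m_1 \in A$, the first summand lies in $A \cdot dA \subseteq \ulPOmega^1_{(A,f)}$. For the second, every $m \in A[f^{-1}]^*$ can be written as $u/f^r$ with $u \in A \cap A[f^{-1}]^*$ and $r \geq 0$ (one sees this because $m \cdot f^r \in A$ becomes a unit in $A[f^{-1}]$ for $r$ large), so $d\log m = d\log u - r\, d\log f \in \ulPOmega^1_{(A,f)}$. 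The same computation that shows classical HKR descends to homology yields $\Phi \circ b = 0$ on $\ulPHH(A,f)_2$, so $\Phi$ factors through $\bar\Phi: \ulPHH_1(A,f) \to \ulPOmega^1_{(A,f)}$.

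Next I would define a candidate inverse $\Psi: \ulPOmega^1_{(A,f)} \to \ulPHH_1(A,f)$ on the standard generators by $\Psi(a\, db) := [a \otimes b]$ and $\Psi(a\, d\log m) := [a m^{-1} \otimes m]$ for $a, b \in A$ and $m \in A \cap A[f^{-1}]^*$; both representatives lie in $\ulPHH(A,f)_1$ by the product-in-$A$ condition ($1 \cdot 1 = 1$ and $m^{-1} \cdot m = 1$). Well-definedness amounts to checking that each relation among generators inherited from $\Omega^1_{A[f^{-1}]}$ is realized by a boundary in $\ulPHH(A,f)_2$. The Leibniz relation $d(a_1 a_2) = a_1\, da_2 + a_2\, da_1$ corresponds to $b(a_0 \otimes a_1 \otimes a_2)$; the compatibility $da = a\, d\log a$ for $a \in A \cap A[f^{-1}]^*$ is immediate from $a \cdot a^{-1} = 1$; and the key new case, the $d\log$-additivity $d\log(mm') = d\log m + d\log m'$, is realized modulo boundaries by combining $b(m^{-1} \otimes m'^{-1} \otimes mm')$ with $b(1 \otimes m' \otimes m'^{-1})$ and $b(m' \otimes m'^{-1} \otimes 1)$, whose source chains all lie in $\ulPHH(A,f)_2$ by the product condition.

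Finally, $\bar\Phi \circ \Psi$ equals the identity on each generator of $\ulPOmega^1_{(A,f)}$, while $\Psi \circ \bar\Phi$ applied to $[a_0 m_0 \otimes a_1 m_1]$ yields $[a_0 m_0 m_1 \otimes a_1 + a_0 a_1 m_0 \otimes m_1]$, which agrees with $[a_0 m_0 \otimes a_1 m_1]$ modulo the boundary $b(a_0 m_0 \otimes a_1 \otimes m_1) \in \ulPHH(A,f)_2$ (the three-fold tensor satisfies $m_0 \cdot 1 \cdot m_1 = m_0 m_1 \in A$). For the modulus statement, the classical HKR isomorphism $HH_1(A[f^{-1}]) \cong \Omega^1_{A[f^{-1}]}$ is $A[f^{-1}]$-linear via left-factor multiplication, so $\bar\Phi$ and $\Psi$ extend $\ulMO$-equivariantly to mutually inverse maps between $\ulMHH_1(A,f) = \ulMO \cdot \ulPHH_1$ and $\ulMOmega^1_{(A,f)} = \ulMO \cdot \ulPOmega^1$. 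The main obstacle throughout is the well-definedness of $\Psi$: every $\Omega^1$-relation must be realized by a boundary lying specifically in $\ulPHH(A,f)_2$ rather than just in $HH(A[f^{-1}])_2$, and the delicate new case beyond classical HKR is the $d\log$-additivity, handled by the explicit three-fold chain $m^{-1} \otimes m'^{-1} \otimes mm'$.
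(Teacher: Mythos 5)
Your proposal is correct and follows essentially the same route as the paper: the HKR-type map $a_0m_0\otimes a_1m_1\mapsto a_0m_0\,d(a_1m_1)$ in one direction, the generator-wise inverse $a\,db\mapsto a\otimes b$, $a\,d\log m\mapsto am^{-1}\otimes m$ in the other, and extension to $\ulMHH_1$ by $\ulMO$-linearity. You are in fact more careful than the paper's own (very terse) argument about the well-definedness of the inverse $\Psi$, in particular in realizing the $d\log$-additivity relation by explicit boundaries lying in $\ulPHH(A,f)_2$.
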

 \begin{proof}
     Since $b(a_0\otimes a_1)=a_0a_1-a_1a_0=0$,
     \[\uline{P}HH_1(A,f)=\langle \sum c_0\otimes c_1\rangle/\langle c_0c_1\otimes c_2-c_0\otimes c_1c_2+c_2c_0\otimes c_1\rangle,\]
     where $c_i=a_im_i$ for $a_i\in A$, $m_i\in A[f^{-1}]^*$ such that $\prod m_i\in A$. Let $e:\underline{P}C_1(A,f):=\langle \sum c_0\otimes c_1\rangle\to\Omega_{A[f^{-1}]}$ be the map which sends $a_0m_0\otimes a_1m_1\in\underline{P}C_1(A,f)$ to
     \begin{align*}     a_0m_0d(a_1m_1)&=a_0m_0m_1da_1+a_0a_1m_0dm_1\\
     &=a_0m_0m_1da_1+a_0a_1m_0m_1d\log m_1
     \end{align*}
     Since $m_0m_1\in A$ and $d\log m_1=d\log m_1'-d\log m_1''$, where $m_1=\frac{m_1'}{m_1''}$ and $m_1', m_1''\in A\cap A[f^{-1}]^*$, $e$ factors through $\underline{P}\Omega^1_{(A,f)}$. Since $d(c_1c_2)=c_1dc_2+c_2dc_1$ in $\underline{P}\Omega_{(A,f)}^1$, and since
     \[
     b(c_0\otimes c_1 \otimes c_2)=c_0c_1\otimes c_2-c_0\otimes c_1c_2+c_2c_0\otimes c_1,
     \]
     in $\ulPHH(A,f)$, $e$ factors through $\underline{P}HH_1(A,f)$. Therefore $e$ is a map from $\ulPHH_1(A,f)$ to $\ulPOmega^1_{(A,f)}$. Conversely, let $\psi:\underline{P}\Omega^1_{(A,f)}\to\underline{P}HH_1(A,f)$ to be the map which sends $\frac{a}{b}dc$ to $\frac{a}{b}\otimes c$. By definition, $e\psi=\mathrm{id}$ and $\psi e=\mathrm{id}$.\\
     \indent Since $e$ and $\psi$ commutes multiplication by an element of $A[f^{-1}]$ on the left, we can extend the isomorphism to $\ulMOmega^1_{(A,f)}\cong\ulMHH_1(A,f)$.
     \end{proof}

 By the universal property of exterior product, the morphisms $\ulPOmega^0_{(A,f)}=A=\ulPHH_0(A,f)$ and $\ulPOmega^1_{(A,f)}\xrightarrow{\sim}\ulPHH_1(A,f)$ induce the morphism of graded-commutative algebras
 \begin{align}\label{HKR morphism}
 \bigwedge\nolimits^*_A\uline{P}\Omega^1_{(A,f)}\to\uline{P}HH_*(A,f).
 \end{align}

 \begin{thm}[HKR-Theorem]\label{HKR}
     Let $(\mathrm{Spec}A,(f))$ be a strict normal crossing modulus pair such that $f$ is a product of prime elements. Then the composition of \cref{HKR morphism} and the isomorphism constructed in \cite[Lemma 3.12]{KM23b}
     \begin{align}\label{HKR morphism 2}
     \uline{P}\Omega^*(A,f)\cong\bigwedge\nolimits^*_A\uline{P}\Omega^1_{(A,f)}\to\uline{P}HH_*(A,f)
     \end{align}
     is an isomorphism. 
     Furthermore, there is an isomorphism
     \begin{align*}
         \uline{M}\Omega^*(A,f)\cong\uline{M}HH_*(A,f).
     \end{align*}  
 \end{thm}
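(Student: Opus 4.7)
The plan is to prove the first isomorphism by étale-local reduction to a toric coordinate model together with a Koszul computation there, and then to deduce the second isomorphism by tensoring with the flat $A$-module $\ulMO(A,f)$.

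First, both $\ulPOmega^*(-,f)$ and $\ulPHH_*(-,f)$ satisfy Zariski and étale descent: the Hochschild side by \cref{Zariski descent} and \cref{etale descent}, and the differentials side directly from the construction of $\ulPOmega^*$ as the sub-dg-algebra of $\Omega^*_{A[f^{-1}]}$ generated by $d\log$ of units coming from the base. Combined with \cref{local coodinate}, this reduces the claim to the toric coordinate model
\[
A = k[y_1, \ldots, y_m], \qquad f = y_1^{s_1} \cdots y_l^{s_l},
\]
in which the pre-log structure $M = A \cap A[f^{-1}]^* = k^* \times y_1^{\bbN} \times \cdots \times y_l^{\bbN}$ depends only on the reduced divisor $y_1 \cdots y_l$, as do both sides of \cref{HKR morphism 2}.

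In this model, $\ulPOmega^*_{(A,f)}$ is visibly the free graded-commutative $A$-algebra on $d\log y_1, \ldots, d\log y_l, dy_{l+1}, \ldots, dy_m$. On the Hochschild side, \cref{comparison2} identifies $\ulPHH(A,f)$ with $A \otimes^{\bbL}_{(A \otimes_k A)^{\rep}_{M \oplus_{k^*} M}} A$, and a Künneth-type splitting (implicit in \cref{comparison theorem of logHH}) combined with $A = k[y_1,\ldots,y_l] \otimes_k k[y_{l+1},\ldots,y_m]$ decomposes this as a derived tensor product over $k$ of a toric factor and the ordinary Hochschild complex of $k[y_{l+1},\ldots,y_m]$; the latter is handled by the classical HKR theorem. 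For the toric factor $A' = k[y_1,\ldots,y_l]$, the formula $(M \oplus_{k^*} M)^{\rep} \cong M \oplus \bbZ^l$ recalled in the proof of \cref{comparison theorem of logHH} identifies $(A' \otimes_k A')^{\rep}_{M \oplus_{k^*} M}$ with the Laurent extension $A'[t_1^{\pm 1}, \ldots, t_l^{\pm 1}]$, over which $A'$ is resolved by the Koszul complex on the regular sequence $(t_1 - 1, \ldots, t_l - 1)$. The resulting homology is the exterior algebra over $A'$ on the classes $[t_i - 1]$, and the preceding proposition, which verifies \cref{HKR morphism 2} in degrees $\le 1$, identifies these classes with $d\log y_i$; multiplicativity of both sides then upgrades the identification to all degrees.

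For the second isomorphism, $A$ is smooth over $k$ and hence locally factorial, so by the preceding lemma the canonical map $\ulMO(A,f) \otimes_A \ulPHH_*(A,f) \to \ulMHH_*(A,f)$ is an isomorphism, while flatness of $\ulMO(A,f)$ gives $\ulMOmega^*_{(A,f)} = \ulMO(A,f) \otimes_A \ulPOmega^*_{(A,f)}$. Tensoring the first isomorphism by $\ulMO(A,f)$ yields the second. The main obstacle is the toric log HKR step: one must carefully trace the replete bar construction to ensure the Koszul class $[t_i - 1]$ matches $d\log y_i$ under \cref{HKR morphism 2}, and must verify that the Künneth splitting between the toric and polynomial factors is compatible with the HKR maps on each factor.
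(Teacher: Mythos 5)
Your proposal is correct, and the global skeleton matches the paper's: reduce by Zariski and \'etale descent (\cref{Zariski descent}, \cref{etale descent}, \cite[Proposition 3.17]{KM23b}) together with \cref{local coodinate} to the polynomial model $(k[y_1,\dots,y_m],\,y_1^{s_1}\cdots y_l^{s_l})$, and obtain the $\ulMHH$ statement by tensoring with the invertible (hence flat) module $\ulMO(A,f)$. Where you genuinely diverge is in the model case itself. The paper never returns to the log-Hochschild description there: it writes down explicit monomial-basis direct-sum decompositions $\Omega^q_{A[f^{-1}]}=\ulPOmega^q_{(A,f)}\oplus D^q$ and $A[f^{-1}]^{\otimes q+1}=\ulPC_q(A,f)\oplus E_q$, checks that the Hochschild differential and the classical HKR isomorphism for the smooth algebra $A[f^{-1}]$ respect these decompositions, and concludes that the restriction to the distinguished summands is an isomorphism. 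You instead invoke \cref{comparison2}, split off the log-trivial polynomial factor by a K\"unneth argument over $k$, and compute the toric factor by resolving $A'=k[y_1,\dots,y_l]$ over $(A'\otimes_kA')^{\rep}_{M\oplus_{k^*}M}\cong A'[t_1^{\pm1},\dots,t_l^{\pm1}]$ via the Koszul complex on the regular sequence $(t_1-1,\dots,t_l-1)$; this is the standard log-HKR computation and is sound, with the degree-$\leq 1$ proposition plus multiplicativity (and a rank count of free modules) pinning down $[t_i-1]\mapsto d\log y_i$. The trade-off: the paper's summand argument is more elementary and sidesteps the bookkeeping of tracing Koszul classes through the replete bar construction (the step you yourself flag as the main obstacle), while your route makes the identification of $\ulPHH$ with log Hochschild homology do real work and would generalize more readily beyond the strict normal crossing monomial situation. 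Both arguments are valid; neither has a gap that would sink it.
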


 \begin{proof}
     First, we prove that \cref{HKR morphism 2} is isomorphism if \[(A,f)=(k[x_1,\dots,x_s,y_1,\dots,y_t],x_1^{r_1}\cdots x_s^{r_s}).\] There is a commutative diagram
     \[
     \begin{tikzcd}
         \ulPOmega^q_{(A,f)} \ar[r] \arrow[d] & \ulPHH_q(A,f) \ar[d] \\
         \Omega^q_{A[f^{-1}]} \ar[r, "\sim"] & HH_q(A[f^{-1}] ),
     \end{tikzcd}
     \]
     where the bottom map is an isomorphism by the HKR-theorem for the smooth $k$-algebra $A[f^{-1}]$. Here, $\ulPOmega^q_{(A,f)}$ is a direct summand of $\Omega^q_{A[f^{-1}]}$ and $\ulPHH_q(A,f)$ is a direct summand of $HH_q(A[f^{-1}])$. Indeed,
     \begin{align*}
         \Omega^q_{A[f^{-1}]}&=\bigoplus_{i_j\in\mathbb{Z},\ k_l\geq0,\ s_m+t_n=q}k\cdot x_1^{i_1}\cdots x_s^{i_s}y_1^{k_1}\cdots y_t^{k_t}dx_{s_1}\wedge\cdots\wedge dx_{s_m}\wedge dy_{t_1}\wedge\cdots\wedge dy_{t_n}\\         &=\ulPOmega^q_{(A,f)}\oplus D,
     \end{align*}
     where
     \begin{align*}     \ulPOmega^q_{(A,f)}=&\bigoplus_{i_j,k_l\geq0,\ s_m+t_n=q}k\cdot x_1^{i_1}\cdots x_s^{i_s}y_1^{k_1}\cdots y_t^{k_t}\cdot d\log x_{s_1}\wedge\cdots\wedge d\log x_{s_m}\wedge dy_{t_1}\wedge\cdots\wedge dy_{t_n}\\
     D^q:=&\bigoplus_{\exists i_{s_m'}<0,\ s_m+t_n=q}k\cdot x_1^{i_1}\cdots x_s^{i_s}y_1^{k_1}\cdots y_t^{k_t}\cdot d\log x_{s_1}\wedge\cdots\wedge d\log x_{s_m}\wedge dy_{t_1}\wedge\cdots\wedge dy_{t_n}.
     \end{align*}
     Also,
     \begin{align*}         
     A[f^{-1}]^{\otimes q+1}&=\bigoplus_{i_j\in\mathbb{Z}^s,\  k_l\in(\mathbb{Z}_{\geq0})^t} k\cdot x^{i_0}y^{k_0}\otimes\cdots\otimes x^{i_q}y^{k_q}\\
         &=\ulPC_q(A,f)\oplus E_q
     \end{align*}
     where $x^{i_j}=x_1^{i_{j,1}}\cdots x_s^{i_{j,s}}$, $y^{k_l}=y_1^{k_{l,1}}\cdots y_t^{k_{l,t}}$, and
     \begin{align*}
         \ulPC_q(A,f)&:=\bigoplus_{x^{i_0}y^{k_0}\cdots x^{i_q}y^{k_q}\in A} k\cdot x^{i_0}y^{k_0}\otimes\cdots\otimes x^{i_q}y^{k_q}\\
         E_q&:=\bigoplus_{x^{i_0}y^{k_0}\cdots x^{i_q}y^{k_q}\notin A} k\cdot x^{i_0}y^{k_0}\otimes\cdots\otimes x^{i_q}y^{k_q}.
     \end{align*}
     By the definition of the differential on $HH(A[f^{-1}])$, it decompose two chain complexes $HH(A[f^{-1}])=\ulPHH(A,f)\oplus E_*$, moreover $HH_q(A[f^{-1}])=\ulPHH_q(A,f)\oplus H_q(E)$. Since the canonical map $\Omega^q_{A[f^{-1}]}\xrightarrow{\sim}HH_q(A[f^{-1}])$ sends $\ulPOmega^q_{(A,f)}$ to $\ulPHH_q(A,f)$ and $D^q$ to $E_q$, the canonical map $\ulPOmega^q_{(A,f)}\to\ulPHH_q(A,f)$ is an isomorphism.\\
     \indent For strict normal crossing modulus pair $(\mathrm{Spec}A,(f))$, there is an open covering $\{\mathrm{Spec}A_k\}$ of $\mathrm{Spec}(A)$, such that there is an \'etale morphism $\mathrm{Spec}A_k\to\mathrm{Spec}(k[x_1,\dots,x_n])$ with $(f)\times_{\mathrm{Spec}A}\mathrm{Spec}A_k=(x_1^{r_1}\cdots x_i^{r_i})\times_{\mathrm{Spec}(k[x_1,\dots,x_n])}\mathrm{Spec}A_k$ and  $x_j$ primes in $A_k$ by \cref{local coodinate}. Therefore \'etale descent of $\ulPHH_q$ and $\ulPOmega^q$ (\cite[Proposition 3.17]{KM23b}) induces the HKR isomorphism  for $(\mathrm{Spec}A,(f))$.\\
     \indent Since $\ulMOmega^q_{(A,f)}\cong\ulMO(A,f)\otimes_A\ulPOmega^q_{(A,f)}$ and $\ulMHH_q(A,f)\cong\ulMO(A,f)\otimes_A\ulPHH_q(A,f)$, the canonical map is an isomorphism.
 \end{proof}

 \section{Modulus Cyclic Homology}
 In this section, we will construct  modulus (periodic and negative) cyclic homology. From now on, we assume characteristic of $k$ is $0$.

 Before doing that, recall the cyclic homology of a flat $k$-algebra $A$. The cyclic group $C_{n+1}$ acts on $A^{\otimes n+1}$ by
 \[
 t_n(a_0\otimes\cdots\otimes a_{n-1}\otimes a_n)=a_n\otimes a_1\otimes\cdots\otimes a_{n-1}.
 \]
 This action and simplicial structure on $HH(A)$ is compatible, that is, $HH(A)$ is a functor from the cyclic category $\Lambda^{\mathrm{op}}$ to the derived $\infty$-category $\calD(k)$ of chain complexes of $k$-modules (cf.\cite{Lod98}). Its geometric realization 
 \[ HH(A)=\varinjlim_{[n]\in\Delta^{\mathrm{op}}}A^{\otimes n+1}\in \mathrm{Fun}(B\bbT,\calD(k))
 \]
 has an action by the circle $\bbT$. Cyclic homology and negative cylcic homology are defined by
 \begin{align*}
     HC(A)&:=(HH(A))_{h\bbT}\\
     HC^-(A)&:=(HH(A))^{h\bbT}
 \end{align*}
 the homotopy orbits and homotpy fixed points. For any $X\in \mathrm{Fun}(B\bbT,\calD(k))$, there is a norm map $N:\Sigma X_{h\bbT}\to X^{h\bbT}$ (cf. \cite{NikSch18}), whose cofiber is called \emph{Tate cohomology} and denoted by $X^{t\bbT}$. Periodic cyclic homology $HP(A)$ of $A$ is the Tate cohomology $(HH(A))^{t\bbT}$.\\
 \indent For a cyclic object $M_\bullet\in\mathrm{Fun}(\Lambda^{\mathrm{op}},\calD(k))$, we can define operators
 \begin{align*}
  b&:=\sum_{i=0}^n(-1)^id_i:M_n\to M_{n-1}\\
  B&:=(1-(-1)^{n+1}t_{n+1})t_{n+1}s_n\left(\sum_{i=0}^n(-1)^{ni}t_n^i\right)
:M_n\to M_{n+1},
 \end{align*}
 and these maps satisfies $b^2=B^2=bB+Bb=0$. If $M=HH(A)$, there is a commutative diagram 
\begin{equation}\label{commutativity of operators}
 \begin{tikzcd}
     A^{\otimes n} \ar[d, "e"] & A^{\otimes n+1} \ar[r, "B"] \ar[l, "b"']  \ar[d, "e"] & A^{\otimes n+2} \ar[d, "e"]\\
     \Omega^{n-1}_A & \Omega^n_A  \ar[l, "0"'] \ar[r, "d"] & \Omega^{n+1}_A,
 \end{tikzcd}
 \end{equation}
 where the vertical morphisms $e:A^{\otimes n+1}\to \Omega^n_A$ are
 \begin{align}\label{HKR inverse}
 e(a_0\otimes\cdots\otimes a_n)=\frac{1}{n!}a_0da_1\wedge\cdots\wedge da_n.
 \end{align}
 By \cite{Hoy18}, $HP(A)$ is equivalent to the total complex of the bicomplex

 \begin{equation}\label{bicomplex}
 \begin{tikzcd}
      & \vdots \ar[d] & \vdots \ar[d] & \vdots \ar[d] \\
     \cdots & A^{\otimes 3} \ar[l] \ar[d, "b"] & A^{\otimes 2} \ar[l, "B"'] \ar[d, "b"] & A \ar[l, "B"']\\
     \cdots & A^{\otimes 2} \ar[l] \ar[d, "b"] & A \ar[l, "B"']\\
     \cdots & A \ar[l] .
 \end{tikzcd}
 \end{equation}
 Similarly, if we remove all negatively (resp. positively) graded columns, then its total complex is equivalent to $HC(A)$ (resp. $HC^-(A)$).

 \indent If $f=f_1^{r_1}\cdots f_i^{r_i}$ is a product of prime elements, then $C_{n+1}$ acts on the $n$-th group of $\ulMHH(A,f)$. Indeed, since $\ulMO(A,f)=\frac{1}{f_1^{r_1-1}\cdots f_i^{r_i-1}}\cdot A$, we have
 \begin{align}\label{circle action}
 \begin{split}
 &t\left(\frac{a_0}{f_1^{r_1-1}\cdots f_i^{r_i-1}}b_0\otimes\cdots\otimes a_nb_n\right)
 =a_nb_n\otimes \frac{a_0b_0}{f_1^{r_1-1}\cdots f_i^{r_i-1}}\otimes\cdots\otimes a_{n-1}b_{n-1}\\
 =&\frac{a_n}{f_1^{r_1-1}\cdots f_i^{r_i-1}}(f_1^{r_1-1}\cdots f_i^{r_i-1}b_n)\otimes a_0\frac{b_0}{f_1^{r_1-1}\cdots f_i^{r_i-1}}\otimes\cdots\otimes a_{n-1}b_{n-1}\\
 \in&\ulMHH(A,f).
 \end{split}
 \end{align}
 Thus $\ulMHH(A,f)$ also has a circle action.
 \begin{dfn}
     For a modulus pair $(A,f)$ such that $f=f_1^{r_1}\cdots f_i^{r_i}$ is a product of prime elements, define
     \begin{align*}
         &\ulMHC(A,f):=(MHH(A,f))_{h\bbT}\\
         &\ulMHNC(A,f):=(MHH(A,f))^{h\bbT}\\
         &\ulMHP(A,f):=(MHH(A,f))^{t\bbT}.
     \end{align*}
     We let $\ulMHC_n(A,f)$, $\ulMHNC_n(A,f)$, and $\ulMHP_n(A,f)$ be their homologies, and call them \emph{modulus (negative and periodic) cyclic homologies}.
 \end{dfn}

 \begin{thm}\label{isom for cyclic}
    If $(A,f)$ is strict normal crossing and $f=f_1^{r_1}\cdots f_n^{r_n}$ for prime elements $f_j\in A$, then
    \begin{align*}
        &\ulMHC_n(A,f)\cong (\ulMOmega^n_{(A,f)}/d\ulMOmega^{n-1}_{(A,f)})\times H^{n-2}(\ulMOmega^*_{(A,f)})\times H^{n-4}(\ulMOmega^*_{(A,f)})\times\cdots\\        &\ulMHNC(A,f)\cong\mathrm{Ker}(\ulMOmega_{(A,f)}^n\to \ulMOmega_{(A,f)}^{n+1})\times H^{n+2}(\ulMOmega_{(A,f)^*})\times H^{n+4}(\ulMOmega_{(A,f)}^*)\times\cdots\\
        &\ulMHP_n(A,f)\cong \prod_{p\in\mathbb{Z}}H^{2p-n}(\ulMOmega^*(A,f)).
    \end{align*}
\end{thm}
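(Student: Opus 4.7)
My plan is to follow the argument of Loday \cite[Theorem 3.4.12]{Lod98} via the $(b,B)$-bicomplex \cref{bicomplex}, applied to $\ulMHH(A,f)$ in place of $HH(A[f^{-1}])$, together with the modulus HKR isomorphism \cref{HKR} and the compatibility of the Connes operator $B$ with the de Rham differential $d$. The first task is to verify that the whole cyclic structure restricts from $HH(A[f^{-1}])$ to $\ulMHH(A,f)$. For the cyclic operator $t$ this is precisely \cref{circle action}. The degeneracies $s_i$ insert a $1$, which leaves the product $\prod m_j$ unchanged, so they too preserve the modulus condition. Since $B$ is built from $t$, $s$, and signs, it preserves $\ulMHH(A,f)$. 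Thus the bicomplex \cref{bicomplex} restricts to a bicomplex built from $\ulMHH_n(A,f)$, whose three relevant totalizations (full, right-truncated at zero, left-truncated at zero) compute $\ulMHP(A,f)$, $\ulMHC(A,f)$, and $\ulMHNC(A,f)$ respectively, by the same formal arguments of Hoyois that applied in the classical case.

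Next, I would run the standard column filtration spectral sequence on this bicomplex. Its $E_1$-page consists of the $b$-homology of each column, which by \cref{HKR} equals $\ulMOmega^q_{(A,f)}$. The induced differential on $E_1$ must agree with the de Rham differential $d$ on $\ulMOmega^\bullet_{(A,f)}$. This reduces to showing that the diagram \cref{commutativity of operators}, with $e$ as in \cref{HKR inverse}, restricts to a diagram whose vertices are $\ulMHH_*(A,f)$ and $\ulMOmega^*_{(A,f)}$. Since both $\ulMHH(A,f)$ and $\ulMOmega^*_{(A,f)}$ embed in their classical counterparts over $A[f^{-1}]$, where the diagram commutes, it suffices to verify that $e$ carries an element
\[
\tfrac{a_0 b_0}{f_1^{r_1-1}\cdots f_n^{r_n-1}}\otimes a_1b_1\otimes\cdots\otimes a_qb_q\in\ulMHH_q(A,f)
\]
into $\ulMOmega^q_{(A,f)}$; this follows from expanding each $d(a_jb_j)=b_j\,da_j+a_jb_j\,d\log b_j$ exactly as in \cref{deRham}.

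Granted these two inputs, the formulas follow by standard manipulations with the resulting double complex (which, since $\mathrm{char}(k)=0$ is in force, has no further obstructions). For $\ulMHP$, the bicomplex is $2$-periodic and unbounded in both directions, so its total complex is a product of shifted copies of $\ulMOmega^\bullet_{(A,f)}$, yielding $\ulMHP_n(A,f)\cong \prod_{p\in\bbZ}H^{2p-n}(\ulMOmega^*_{(A,f)})$. For $\ulMHC_n$ and $\ulMHNC_n$, the first column contributes $\ulMOmega^n/d\ulMOmega^{n-1}$ and $\mathrm{Ker}(\ulMOmega^n\to\ulMOmega^{n+1})$ respectively, and each subsequent column contributes one shifted de Rham cohomology group, giving the asserted decompositions. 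The main obstacle will be the compatibility of $e$ with the modulus structure in the second paragraph; once this is established, the spectral sequence computation is formal.
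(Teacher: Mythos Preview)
Your proposal is correct and follows essentially the same route as the paper: verify that the cyclic structure and the map $e$ of \cref{HKR inverse} restrict from $HH(A[f^{-1}])$ to $\ulMHH(A,f)$ (using exactly the expansion $d(a_jb_j)=b_j\,da_j+a_jb_j\,d\log b_j$ you indicate), invoke Hoyois for the $(b,B)$-bicomplex description of the three theories, use the commutativity \cref{commutativity of operators} to transport $B$ to $d$, and then read off the answer from the resulting product of (truncated) de Rham complexes. The only cosmetic difference is that the paper phrases the comparison as a column-wise quasi-isomorphism of bicomplexes rather than as a spectral-sequence computation, but these are equivalent here.
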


\begin{proof}
    We can restrict the $e:HH(A[f^{-1}])\to\prod_{p\geq0}\Omega^p_{A[f^{-1}]}[p]$ from \cref{HKR inverse} to subcomplexes $e:\ulMHH(A,f)\to \prod_{p\geq0}\ulMOmega^p_{(A,f)}[p]$, because
    \begin{align*}
    e\left(\frac{a}{f}\cdot a_0b_0\otimes a_1b_1\otimes\cdots\otimes a_nb_n\right)&:=\frac{a}{n!f}\cdot a_0b_0d(a_1b_1)\wedge\cdots\wedge d(a_nb_n)\\
    &=\frac{a}{n!f}\cdot a_0b_0\cdots b_n\sum\omega_1\wedge\cdots\wedge\omega_n\in\ulMOmega^n_{(A,f)},
    \end{align*}
    where $\omega_i$ is $da_i$ or $a_id\log b_i$. The morphism $e$ is the inverse map of the HKR isomorphism $\ulMOmega^n_{(A,f)}\xrightarrow{\sim}\ulMHH_n(A,f)$ (cf. \cite[section 9.8]{Wei94}). Thus $e$ is a quasi-isomorphism. Similar for periodic cyclic homology $HP(A)$ \cref{bicomplex}, by \cite{Hoy18}, modulus periodic cyclic homologies are equivalent to the total complex $\mathrm{Tot}^{\prod}(\ulMHH(A,f)[\bullet],B)$, where $(\ulMHH(A,f)_\bullet,B)$ is the bicomplex associated to the chain complex of complexes
    \[
    (\cdots\to\ulMHH(A,f)[n]\xrightarrow{B}\ulMHH(A,f)[n-1]\to\cdots).
    \]
    By the commutativity of \cref{commutativity of operators}, the equivalence $e$ commutes operators $B:\ulMHH(A,f)\to\ulMHH(A,f)[-1]$ and the de Rham differential $d:\prod_{p\geq0}\ulMOmega^p_{(A,f)}[p]\to(\prod_{p\geq0}\ulMOmega^p_{(A,f)}[p])[-1]$, so there are equivalences
    \[
    \mathrm{Tot}^{\prod}(\ulMHH(A,f)[\bullet],B)\xrightarrow[e]{\sim}\mathrm{Tot}^{\prod}(\prod_{p\geq0}\ulMOmega^p_{(A,f)}[\bullet],d)\cong\prod_{p\in\mathbb{Z}}(\ulMOmega_{(A,f)}^\bullet)^{-*}[2p].
    \]
    Restricting bicomplexes, we obtain equivalences
    \begin{align*}       &\ulMHC(A,f)\simeq\prod_{p\in\mathbb{Z}}(\ulMOmega_{(A,f)}^{\leq p})^{-*}[2p]\\  &\ulMHNC(A,f)\simeq\prod_{p\in\mathbb{Z}}(\ulMOmega_{(A,f)}^{\geq p})^{-*}[2p]\\     &\ulMHP(A,f)\simeq\prod_{p\in\mathbb{Z}}(\ulMOmega_{(A,f)}^\bullet)^{-*}[2p].
    \end{align*}
    If we take homologies, then we obtain the desired isomorphisms
    \begin{align*}       &\ulMHC_n(A,f)\cong\prod_{p\in\mathbb{Z}}H^{2p-n}(\ulMOmega_{(A,f)}^{\leq p})\\  &\ulMHNC_n(A,f)\cong\prod_{p\in\mathbb{Z}}H^{2p-n}(\ulMOmega_{(A,f)}^{\geq p})\\     &\ulMHP_n(A,f)\cong\prod_{p\in\mathbb{Z}}H^{2p-n}(\ulMOmega_{(A,f)}^\bullet).
    \end{align*}
\end{proof}

 \section{Realizations}

 We will construct objects of $\underline{\mathbf{M}}\mathbf{DM}^{\mathrm{eff}}_k$ which represents the modulus Hochschild homology and modulus (periodic and negative) cyclic homologies.

 \begin{dfn}
     For a modulus pair $\calX:=(\overX,X^\infty)$, let $\calM_\calX:=\calO_{\overX}\cap\calO_X^*$, where $X:=\overX\setminus X^\infty$.  For a map of sheaf of rings $\calA\to\calB$, let $B^\cyc_{\calA}(\calB)$ be the cyclic Bar construction in the category $Sh(\overX_{\Zar},\mathrm{Ring})$ of sheaves of rings on $\overX_{\Zar}$. Let $B^\rep_{\bbZ[k^*]}(\bbZ[\calM_{\calX}])$ be the subsheaf of simplicial rings $B^\cyc_{\bbZ[k^*]}(\bbZ[\calM_\calX^{\mathrm{gp}}])$ such that its $n$-th simplex is the monoid ring of pullback of monoids
     \[
     \begin{tikzcd}
        (\oplus_{k^*}^{n+1}\calM_{\calX})^\rep \ar[r] \ar[d] & (\oplus_{k^*}^{n+1}\calM_{\calX})^{\mathrm{gp}} \ar[d] \\
        \calM_{\calX} \ar[r] & \calM_{\calX}^{\mathrm{gp}}.
     \end{tikzcd}
     \]
     Let $\calulPHH(\calX)$ be the sheaf of simplicial rings $B^\cyc_{k}(\calO_{\overX})\otimes_{B^\cyc_{\bbZ[k^*]}(\bbZ[\calM_\calX])}B^\rep_{\bbZ[k^*]}(\bbZ[\calM_{\calX}])$. Define $\calulMHH(\mathcal{X})$ to be the simplicial $\calO_{\overX}$-module $\ulMO(\mathcal{X})\otimes_{\mathcal{O}_{\overline{X}}}\calulPHH(\mathcal{X})$.
 \end{dfn}

 For a strict normal crossing modulus pair $\calX$, $\calulMHH(\calX)$ has a circle action Zariski locally by \cref{circle action}. By \cref{lem:factorization}, we can define a circle action on $\calulMHH(\calX)$ globally, that is, $\calulMHH(\calX)\in\mathrm{Fun}(\Lambda^\op,\calD(\overX))$ is a cyclic object of $\calD(\overX)$.

 \begin{dfn}
     For strict normal crossing modulus pair $\calX$, \emph{modulus Hochschild homology} $\ulMHH(\calX)\in\mathrm{Fun}(B\bbT,\calD(k))$ of $\calX$ is an image of the composition of functors\footnote{We use homological conventions for $\calD(k)$. So the objects are chain complexes, not cochain complexes.}
     \[
     \mathrm{Fun}(\Lambda^\op,\calD(\overX))\xrightarrow{|-|}\mathrm{Fun}(B\bbT,\calD(\overX))\xrightarrow{R\Gamma(\overX,-)^{-*}}\mathrm{Fun}(B\bbT,\calD(k)).
     \]
     The \emph{modulus (negative and periodic) cyclic homology} of $\calX$ are 
     \begin{align*}
         &\ulMHC(\calX):=(MHH(\calX))_{h\bbT}\\
         &\ulMHNC(\calX):=(MHH(\calX))^{h\bbT}\\
         &\ulMHP(\calX):=(MHH(\calX))^{t\bbT}.
     \end{align*}
     Let $\ulMHH_n(\calX)$, $\ulMHC_n(\calX)$, $\ulMHNC_n(\calX)$, and $\ulMHP_n(\calX)$ be their homologies.
 \end{dfn}

 \begin{thm}\label{realization theorem}
     For every strict normal crossing modulus pair $\mathcal{X}=(\overline{X},X^\infty)$, we have the following isomorphisms\footnote{We use cohomological conventions for $\ulMDMeff_k$. So the objects are cochain complexes, not chain conmplexes}
     \begin{align*}
     &\mathrm{Hom}_{\ulMDMeff_k}(M(\mathcal{X}),\prod_{p\in\mathbb{Z}}\bfulMOmega^p[p-n])\cong\prod_{p\in\mathbb{Z}}H^{p-n}_{\Zar}(\overX,\ulMOmega^p_{\calX})\cong\ulMHH_n(\mathcal{X})\\
     &\mathrm{Hom}_{\ulMDMeff_k}(M(\mathcal{X}),\prod_{p\in\mathbb{Z}}\bfulMOmega^{\leq p}[2p-n])\cong\prod_{p\in\bbZ}\bbH^{2p-n}_{\Zar}(\overX,\ulMOmega^{\leq p}_{\calX})\cong\ulMHC_n(\mathcal{X})\\
     &\mathrm{Hom}_{\ulMDMeff_k}(M(\mathcal{X}),\prod_{p\in\mathbb{Z}}\bfulMOmega^{\geq p}[2p-n])\cong\prod_{p\in\bbZ}\bbH^{2p-n}_{\Zar}(\overX,\ulMOmega^{\geq p}_{\calX})\cong\ulMHNC_n(\mathcal{X})\\
     &\mathrm{Hom}_{\ulMDMeff_k}(M(\mathcal{X}),\prod_{p\in\mathbb{Z}}\bfulMOmega^\bullet[2p-n])\cong\prod_{p\in\bbZ}\bbH^{2p-n}_{\Zar}(\overX,\ulMOmega^{\bullet}_{\calX})\cong\ulMHP_n(\mathcal{X}).
     \end{align*}
 \end{thm}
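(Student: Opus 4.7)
The plan is to establish the left isomorphisms (Hom-in-$\ulMDMeff_k$ to hypercohomology) and the right isomorphisms (hypercohomology to modulus (cyclic) homology) separately. The left isomorphisms are formal consequences of the representability results \cref{Hodge realization} and \cref{de Rham realization}; the right isomorphisms follow by globalizing the affine HKR results \cref{HKR} and \cref{isom for cyclic}.

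For the left isomorphisms, since $\mathrm{Hom}_{\ulMDMeff_k}(M(\calX),-)$ commutes with products in the second variable, the Hochschild line reduces term-by-term to $\mathrm{Hom}_{\ulMDMeff_k}(M(\calX),\bfulMOmega^p[p-n])\cong H^{p-n}_{\Zar}(\overX,\ulMOmega^p_{\calX})$, which is exactly \cref{Hodge realization}. For the three cyclic lines, the objects $\bfulMOmega^{\leq p}$, $\bfulMOmega^{\geq p}$, and $\bfulMOmega^\bullet$ are truncations of the complex $\bfulMOmega^\bullet$ in $\ulMDMeff_k$, and the argument runs parallel to the proof of \cref{de Rham realization}: the Hodge-to-de Rham spectral sequences for these truncations abut to the stated hypercohomologies, while the companion spectral sequences computing the Hom-groups in $\ulMDMeff_k$ have the same $E_1$-pages by \cref{Hodge realization} and the same abutments, giving the isomorphisms.

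For the right isomorphisms, the strategy is to promote the affine statements \cref{HKR} and \cref{isom for cyclic} to equivalences of sheaves of complexes on $\overX$. By \cref{lem:factorization}, $\calX$ is Zariski-locally of the form $(\Spec A,(f_1^{r_1}\cdots f_n^{r_n}))$, so \cref{HKR} produces a map $e:\calulMHH(\calX)\to\prod_{p\geq 0}\ulMOmega^p_\calX[p]$ of sheaves of chain complexes which is a quasi-isomorphism Zariski-locally, hence globally; the descent making this coherent is provided by \cref{Zariski descent}. Taking derived global sections and commuting $R\Gamma$ with the product yields $\ulMHH_n(\calX)\cong\prod_p H^{p-n}(\overX,\ulMOmega^p_\calX)$. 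For the cyclic variants, the same map $e$ intertwines the $B$-operator on $\calulMHH(\calX)$ with the de Rham differential on $\ulMOmega^\bullet_\calX$ by the commutativity of diagram \cref{commutativity of operators}, so $e$ is an equivalence of cyclic objects in $\calD(\overX)$. Passing to homotopy orbits, homotopy fixed points, and Tate constructions as in the proof of \cref{isom for cyclic} then identifies $\ulMHC_n$, $\ulMHNC_n$, and $\ulMHP_n$ of $\calX$ with the hypercohomologies of the truncations $\ulMOmega^{\leq p}_\calX$, $\ulMOmega^{\geq p}_\calX$, and $\ulMOmega^\bullet_\calX$.

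The main obstacle is the $\bbT$-equivariant globalization step underlying the cyclic identifications: while the chain-level HKR quasi-isomorphism of \cref{HKR} is easy to sheafify, one must check that the local equivalences assemble into an equivalence in $\mathrm{Fun}(B\bbT,\calD(\overX))$ before taking $R\Gamma$. This reduces to the naturality of the HKR map $e$ under Zariski (and \'etale) restriction together with the compatibilities encoded in \cref{commutativity of operators}, but the coherent gluing of these local data into a global $\bbT$-equivariant quasi-isomorphism is the delicate part.
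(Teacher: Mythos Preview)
Your proposal is correct and follows essentially the same route as the paper: the left isomorphisms come from \cref{Hodge realization} and \cref{de Rham realization}, and the right isomorphisms come from the sheaf-level HKR map $e:\calulMHH(\calX)\to\prod_{p\geq 0}\ulMOmega^p_\calX[p]$ together with the compatibility \cref{commutativity of operators}.

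The one place where the paper differs from your outline is precisely at the obstacle you flag. You frame the cyclic case as requiring an equivalence in $\mathrm{Fun}(B\bbT,\calD(\overX))$ and worry about coherently gluing local $\bbT$-equivariant data. The paper sidesteps this entirely: it first applies $R\Gamma(\overX,-)$ to obtain $\ulMHH(\calX)\in\mathrm{Fun}(B\bbT,\calD(k))$, and then invokes the explicit bicomplex model (Hoyois) to identify $\ulMHP(\calX)$ with $\mathrm{Tot}^{\prod}(\ulMHH(\calX)[\bullet],B)$. At that point one only needs the strict chain-level identity $e\circ B = d\circ e$ from \cref{commutativity of operators}, which holds globally by construction, not merely locally; no homotopy-coherent gluing is required. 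So your ``delicate part'' is not about assembling local equivalences, since both $e$ and the $B$-operator are already globally defined maps of sheaves, but rather about whether a strict map of cyclic objects suffices to compare Tate constructions, and the bicomplex model answers that affirmatively.
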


 \begin{proof}
     The first isomorphisms are representability of these cohomologies described in \cref{Hodge realization} and \cref{de Rham realization}. The map $e:\calulMHH(\calX)\to\prod_{p\geq0}\ulMOmega^p_{\calX}[p]:a_0\otimes\cdots\otimes a_n\mapsto a_0da_1\wedge\cdots\wedge da_n$ is an equivalence of the chain complexes of sheaves. So we obtain the second isomorphism for $\ulMHH(\calX)$. As observed in the proof of \cref{isom for cyclic}, $\ulMHP(\calX)$ is equivalent to 
     \[
     \mathrm{Tot}^{\prod}(\ulMHH(\calX)[\bullet],B)\xrightarrow[e]{\sim}\mathrm{Tot}^{\prod}((\prod_{p\geq0}R\Gamma(\overX,\ulMOmega^p_{\calX})[p])[\bullet],R\Gamma(\overX,d)),
     \]
     but right hand side is $\prod_{p\in\bbZ}R\Gamma(\overX,\ulMOmega^{*}_{\calX})^{-*}[2p]$. Since cyclic (negative cyclic) homology is a total complex of non-negative (non-positive) parts of bicomplex of periodic cyclic homology, we have equivalences
     \begin{align*}
         &\ulMHC(\calX)\simeq \prod_{p\in\bbZ}R\Gamma(\overX,\ulMOmega^{\leq p}_{\calX})^{-*}[2p]\\
         &\ulMHNC(\calX)\simeq \prod_{p\in\bbZ}R\Gamma(\overX,\ulMOmega^{\geq p}_{\calX})^{-*}[2p]\\
         &\ulMHP(\calX)\simeq \prod_{p\in\bbZ}R\Gamma(\overX,\ulMOmega^{\bullet}_{\calX})^{-*}[2p]
     \end{align*}
     If we take homologies, we get the desired isomorphisms.
 \end{proof}

 \bibliographystyle{amsalpha} 
 \bibliography{references}

\providecommand{\bysame}{\leavevmode\hbox to3em{\hrulefill}\thinspace}
\providecommand{\MR}{\relax\ifhmode\unskip\space\fi MR }
\providecommand{\MRhref}[2]{%
  \href{http://www.ams.org/mathscinet-getitem?mr=#1}{#2}
}
\providecommand{\href}[2]{#2}
\begin{thebibliography}{KMSY21b}

\bibitem[BLP{\O}23]{logHH}
Federico Binda, Tommy Lundemo, Doosung Park, and Paul~Arne {\O}stv{\ae}r, \emph{A {H}ochschild-{K}ostant-{R}osenberg theorem and residue sequences for logarithmic {H}ochschild homology}, Advances in Mathematics \textbf{435} (2023), 109354.

\bibitem[Goo86]{Goo86}
Thomas~G. Goodwillie, \emph{Relative algebraic k-theory and cyclic homology}, Annals of Mathematics \textbf{124} (1986), no.~2, 347--402.

\bibitem[Hir64]{Hir64}
Heisuke Hironaka, \emph{Resolution of singularities of an algebraic variety over a field of characteristic zero: {II}}, Annals of Mathematics \textbf{79} (1964), no.~2, 205--326.

\bibitem[HKR62]{HKR62}
G.~Hochschild, Bertram Kostant, and Alex Rosenberg, \emph{Differential forms on regular affine algebras}, Trans. Amer. Math. Soc. \textbf{102} (1962), 383--408. \MR{142598}

\bibitem[Hoy18]{Hoy18}
Marc Hoyois, \emph{The homotopy fixed points of the circle action on hochschild homology}, 2018, arXiv:1506.07123.

\bibitem[KM23a]{KM23a}
Shane Kelly and Hiroyasu Miyazaki, \emph{Hodge cohomology with a ramification filtration, {I}}, Mathematische Zeitschrift \textbf{305} (2023), no.~4, 70.

\bibitem[KM23b]{KM23b}
Shane Kelly and Hiroyasu Miyazaki, \emph{Hodge cohomology with a ramification filtration, ii}, 2023, arXiv:2306.06864.

\bibitem[KMSY21a]{KMSY21a}
Bruno Kahn, Hiroyasu Miyazaki, Shuji Saito, and Takao Yamazaki, \emph{{Motives with modulus, I: Modulus sheaves with transfers for non-proper modulus pairs}}, {Épijournal de Géométrie Algébrique} \textbf{{Volume 5}} (2021).

\bibitem[KMSY21b]{KMSY21b}
\bysame, \emph{{Motives with modulus, II: Modulus sheaves with transfers for proper modulus pairs}}, {Épijournal de Géométrie Algébrique} \textbf{{Volume 5}} (2021).

\bibitem[KMSY22]{KMSY22}
\bysame, \emph{{Motives with modulus, III: The categories of motives}}, Annals of K-Theory \textbf{7} (2022), no.~1, 119 -- 178.

\bibitem[Koi23]{Koi23}
Junnosuke Koizumi, \emph{Blow-up invariance of cohomology theories with modulus}, 2023.

\bibitem[KSYR16]{KSYR}
Bruno Kahn, Shuji Saito, Takao Yamazaki, and Kay Rülling, \emph{Reciprocity sheaves}, Compositio Mathematica \textbf{152} (2016), no.~9, 1851--1898 (en).

\bibitem[Lod98]{Lod98}
Jean-Louis Loday, \emph{Cyclic {Homology}}, Grundlehren der mathematischen {Wissenschaften}, vol. 301, Springer, Berlin, Heidelberg, 1998.

\bibitem[LW09]{LW09}
Florence Lecomte and Nathalie Wach, \emph{Le complexe motivique de {De} {Rham}}, manuscripta mathematica \textbf{129} (2009), no.~1, 75--90.

\bibitem[NS18]{NikSch18}
Thomas Nikolaus and Peter Scholze, \emph{{On topological cyclic homology}}, Acta Mathematica \textbf{221} (2018), no.~2, 203 -- 409.

\bibitem[{Sta}18]{stacks-project}
The {Stacks Project Authors}, \emph{\textit{Stacks Project}}, \url{https://stacks.math.columbia.edu}, 2018.

\bibitem[Wei94]{Wei94}
Charles~A. Weibel, \emph{An introduction to homological algebra}, Cambridge Studies in Advanced Mathematics, vol.~38, Cambridge University Press, Cambridge, 1994. \MR{1269324}

\end{thebibliography}

\end{document}